%% file: main_article.tex
\documentclass[hidelinks,onefignum,onetabnum]{siamart251216}

\input{ex_shared}

\ifpdf
\hypersetup{
  pdftitle={Effective Resistances and Commute Times in Random Geometric Graphs},
  pdfauthor={Peter J. Mucha, Filip O. Rupchin}
}
\fi

\begin{document}

\maketitle

\begin{abstract}
The commute time between two nodes in a network --- the expected number of steps for a random walk to travel from one node to the other and then return --- is a metric of broad importance arising in community detection, network routing, dimensionality reduction, and diffusion modeling. For random geometric graphs (RGGs), in which nodes are placed at random in a spatial domain and connected pairwise wherever their Euclidean distance is below a threshold radius, the relationship between commute times and the embedding geometry remains poorly understood outside very dense settings (where the role of the geometry disappears and commute times degenerate to a sum of inverse degrees). We develop and numerically validate a model for approximating commute times in sparse RGGs on a torus by combining theoretically motivated geometric contributions with an inverse degree sum. The geometric terms include a universal logarithmic contribution from the Laplacian, a quadratic correction encoding the compact topology of the torus, and a quartic angular term reflecting the square anisotropy of the domain. We fit this model to samples of node pairs across a range of graph sizes and mean degrees, demonstrating good predictive performance and that the geometric terms contribute significantly to model fit. We then study the continuous perturbation of the model from a regular square lattice to a fully random geometric graph, further validating the functional model form through this transition and showing how commute times in sparse RGGs retain meaningful geometric information about the embedding space.
\end{abstract}

\begin{keywords}
Networks, random geometric graphs, percolation, graph Laplacian
\end{keywords}

\begin{MSCcodes}
05C10, 
05C80, 
05C81, 
68R10, 
82B41. 
\end{MSCcodes}

\section{Introduction}
Networks in the real world often obey local spatial constraints. Neurons in the brain are connected by synapses if they are spatially close to one another \cite{penrose2003random}. In communication networks, stations are connected to geographically close neighbors \cite{penrose2003random}. The spread of disease relies on local contact. Friendship networks are strongly influenced by locality (propinquity) and homophily, with similarities between individuals in terms of proximity in some feature space (see, e.g., \cite{McPherson1983}). However, despite their strong dependence on local structure, global properties must also be considered and many of these networks are very large: for example, Instagram reported three billion monthly users in 2025 \cite{cnbc2025instagram}. For such networks, computing global properties exactly is often computationally prohibitive, raising fundamental questions about the extent to which global network properties can be understood or approximated.

Understanding the relationship between local and global structure in spatially embedded networks also speaks to issues about how geometry shapes dynamics on networks. When global properties such as commute times --- which govern how information, disease, or influence spreads across a network --- can be predicted from local and geometric information alone, it suggests how the large-scale behavior of the network is encoded in its small-scale structures. 

Random geometric graphs (RGGs) provide a natural mathematical framework in which to study such questions. By construction, their connectivity is determined entirely by the spatial proximity of nodes, making them a foundational model for studying spatially embedded networks. The commute time between nodes is an essential measure on networks, yet the relationship between commute times and the underlying geometry of spatially constrained networks remains poorly understood in the sparse regimes most relevant to real-world applications. The present study aims to motivate further work to help fill that gap, with the aim of clarifying when and how local and geometric structure governs global random-walk distances in RGGs.

\subsection{Commute Times and Applications}

Let $G = (V, E)$ be a finite, connected, undirected 
graph (below we will also consider weighted graphs) and let $(X_t)_{t \ge 0}$ denote the simple random walk on $G$. The \emph{hitting time} from $u$ to $v$ is
$H_{uv} \equiv \mathbb{E}\!\left[\,\min\{t \ge 0 : X_t = v\} \,\middle|\, X_0 = u\right],$
the expected number of steps for the walk to first reach $v$ starting from $u$. The \emph{commute time} between $u$ and $v$ is then $C_{uv} \equiv H_{uv} + H_{vu}$, the expected number of steps to travel from $u$ to $v$ and back. The commute time defines a metric on $V$. Unlike the geodesic distance, which measures the shortest path between two nodes, the commute time represents an average notion of distance that accounts for a broader array of paths and structures of the graph.

The most common physical interpretation of commute time is effective resistance: the voltage difference between two nodes under a unit current flow from one to the other, defined precisely below in \cref{def:eff-res}. The commute times $C_{uv}$ and effective resistances $E_{uv}$ are proportional \cite{chandra1996electrical}, with $C_{uv} = \mathrm{Vol}(G)\, E_{uv}$, where $\mathrm{Vol}(G)$ is the sum of the (weighted) degrees of nodes of $G$. Utilizing this relationship, we will interchangeably discuss commute times and effective resistances throughout.

The commute time metric has many applications in addition to its link to effective resistance. In network science, it has been used in community detection problems \cite{yen2007graph}, bottleneck detection algorithms \cite{spielman2004nearly}, and routing and network design theory \cite{ghosh2008minimizing}. In machine learning, the commute time has been used for dimensionality reduction \cite{fouss2004novel} and recommendation algorithms \cite{fouss2007random}. The commute time has also been used for molecular identification/description \cite{klein1993resistance} and diffusion modeling \cite{coifman2006diffusion}.

\subsection{Random Graph Models and Random Geometric Graphs}

Random graph models provide natural frameworks for studying large networks. One of the simplest such models, due to Erd\H{o}s and R\'enyi \cite{erdos1959random}, places an edge between each pair of $n$ vertices independently and identically distributed (i.i.d.)\ with probability $p$. Despite its simplicity, this model has been instrumental for studying phase transitions in the $n\to\infty$ thermodynamic limit, with the emergence of a giant component for $p>1/n$ \cite{bollobas2001random}. Other random graph models have been introduced to capture different structural features of real-world networks: the Barab\'asi--Albert model \cite{barabasi1999emergence} reproduces heavy-tailed degree distributions, the Watts--Strogatz model provides a minimal process to generate ``small world'' network properties, and various versions of the stochastic block model \cite{holland1983stochastic} can be used to encode and generate graphs with community structure. 

In contrast to the above models, random geometric graph models explicitly generate from and account for spatial locality in a physically embedded space. Introduced by Gilbert \cite{gilbert1961random}, random geometric graphs (RGGs) are determined by their local geometry. The Gilbert disk model is characterized by a metric space domain $(X,d)$, number of nodes $n$, and radius of connection $r$ \cite{penrose2003random}. Specifically, $n$ points are placed randomly according to a given distribution in $X$ and connected by edges wherever the distances, defined by the metric $d$, between two nodes are smaller than $r$.

Throughout this paper, we work with the {binomial} RGG with points uniformly i.i.d.\ on the two-dimensional unit torus $\mathbb{T}^2 \equiv \mathbb{R}^2 / \mathbb{Z}^2$ endowed with the standard (shortest distance) Euclidean metric. In so doing, we conveniently avoid the boundary effects present in RGGs on the non-periodic square. We write $G(X_n, r_n)$ for the graph of $n$ nodes at points $X_n = \{\xi_1, \dots, \xi_n\}$ drawn independently and uniformly from $\mathbb{T}^2$, with edges placed between pairs $(u, v)$ that satisfy $d(u,v) < r_n$. The expected vertex degree is then $(n-1)\pi r_n^2 \approx n\pi r_n^2$ for large $n$. Many asymptotic results in the literature are first established for the Poissonized counterpart $G(\mathcal{P}_{\mu_n}, r_n)$, in which the vertex set is a Poisson point process of intensity $\mu_n \sim n$, and then transferred to the binomial model by standard coupling arguments (see \cite{penrose2003random}).

Various extensions and generalizations of the Gilbert disk model exist, such as soft RGGs \cite{penrose2016connectivity}, also sometimes referred to as random distance graphs \cite{zhukovskii2010rdg}, that replace the hard radius of connectivity with probability distributions that decay with distance. The current study focuses solely on the uniform Gilbert disk model as defined above; all ``RGG'' references herein imply this model unless explicitly stated otherwise.

\subsection{Key Properties of RGGs}

When studying RGGs, it is natural to ask how connectedness depends on the number of nodes $n$ and radius of connection $r$ \cite{penrose2003random}, especially in the thermodynamic limit, $n \to \infty$, with some appropriate rescaling of the connection radius $r_n = r(n)$. Without loss of generality, we use a standard approach of a specified spatial domain $X$ of unit area, so that $n$ is both the number and density of nodes. 
As noted above, we focus our attention throughout the present work on uniformly distributed points on the two-dimensional unit torus, $X = \mathbb{T}^2$.

The first major result of relevance to our study concerns the emergence of a giant connected component (GCC). Penrose's analysis, carried out first in the Poissonized setting and then transferred to the binomial model by standard coupling arguments, yields the following threshold theorem.

\begin{theorem}[Giant component; Penrose \cite{penrose2003random,penrose2022largec}]
\label{thm:RGG-Giant}
For uniformly distributed points in a $D$-dimensional space, if $n r_n^D \to \lambda \in (0,\infty)$, then the proportion of vertices in the largest connected component (LCC) of the RGG converges in probability to $p_\infty(\lambda)$, with a critical value $\lambda_c$ s.t.\ $p_\infty(\lambda) = 0$ for $\lambda \le \lambda_c$ and $p_\infty(\lambda) > 0$ for $\lambda > \lambda_c$.
\end{theorem}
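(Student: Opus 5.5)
The plan is the standard continuum-percolation route: prove the statement for the Poissonized graph $G(\mathcal{P}_{\mu_n}, r_n)$ and then transfer it to the binomial model. Rescale space by $r_n^{-1}$. The torus becomes a box of side $L_n = r_n^{-1}\to\infty$, points form a Poisson process of intensity $\lambda$ (in the limit), and the connection radius is $1$. Define the infinite-volume Gilbert model on $\mathbb{R}^D$ with intensity $\lambda$, and let $\theta(\lambda)$ be the Palm probability that the origin lies in an infinite cluster. Set $p_\infty(\lambda) \equiv \theta(\lambda)$ and $\lambda_c \equiv \sup\{\lambda:\theta(\lambda)=0\}$. A coupling (thinning) shows $\theta$ is nondecreasing, so $p_\infty=0$ for $\lambda<\lambda_c$ and $p_\infty>0$ for $\lambda>\lambda_c$.

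Two facts give $0<\lambda_c<\infty$. For the lower bound, dominate the cluster of the origin by a subcritical Galton--Watson branching process with mean offspring $\lambda\,\mathrm{vol}(B_1)$. For the upper bound, discretize into small cubes and compare with supercritical site percolation on $\mathbb{Z}^D$ ($D\ge 2$). At $\lambda=\lambda_c$ itself, I would cite the continuity result $\theta(\lambda_c)=0$ (known in $D=2$ via RSW-type arguments; in general dimension I would rely on Penrose's cited statement).

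The convergence $|\mathrm{LCC}|/n \to p_\infty(\lambda)$ in probability splits into an upper and a lower bound.

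\textbf{Upper bound.} Fix $K$. For each point, consider whether its cluster reaches distance $K$ in the rescaled picture. The fraction of such points is a local functional, so an ergodic/second-moment argument shows it concentrates near $\theta_K(\lambda)\downarrow\theta(\lambda)$. Points whose clusters have diameter below $K$ lie in components of size $O(1)$. Hence $|\mathrm{LCC}|/n \le \theta_K(\lambda)+o_P(1)$; letting $K\to\infty$ gives the upper bound, and this also handles $\lambda\le\lambda_c$.

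\textbf{Lower bound.} For $\lambda>\lambda_c$, I need uniqueness: almost all points whose clusters reach distance $K$ should belong to a single component of the box. This is the main obstacle. I would use a renormalization (block) argument. Tile the box into mesoscopic blocks of side $M$, and call a block good if it contains a crossing cluster that is unique among clusters of diameter $\ge M/10$ and connects to the crossing clusters of neighboring blocks. Supercriticality gives $\mathbb{P}(\text{good})\to 1$ as $M\to\infty$; this is the continuum analogue of Grimmett--Marstrand/Penrose--Pisztora, which I would take from the cited Penrose work rather than reprove. Good blocks then form a highly supercritical finitely dependent site percolation, whose largest cluster contains all but a small fraction of the blocks. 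On that event, every large local cluster joins one common component, so $|\mathrm{LCC}|/n \ge \theta_K(\lambda)-\varepsilon$ with high probability.

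\textbf{De-Poissonization.} Couple $\mathcal{P}_{n(1-\delta)}\subset X_n\subset\mathcal{P}_{n(1+\delta)}$ with high probability. Continuity of $\theta$ in $\lambda$ away from $\lambda_c$ (and monotonicity otherwise) then transfers the limit to the binomial graph $G(X_n,r_n)$.
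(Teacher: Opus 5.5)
Your outline follows the route the paper invokes. The paper gives no proof of its own; it attributes the result to Penrose's analysis of the Poissonized graph, transferred to the binomial model by coupling. Your steps are a sound sketch of that argument whenever $\lambda\neq\lambda_c$: thinning for monotonicity of $\theta$, branching-process and site-percolation bounds giving $0<\lambda_c<\infty$ for $D\ge 2$, the local upper bound via $\theta_K\downarrow\theta$, block renormalization with the continuum Grimmett--Marstrand input for the lower bound, and the sandwich $\mathcal{P}_{n(1-\delta)}\subset X_n\subset\mathcal{P}_{n(1+\delta)}$ together with monotonicity of the largest component under adding points.

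The step that does not go through as you describe it is $\lambda=\lambda_c$. There your upper bound gives only $|\mathrm{LCC}|/n\le\theta(\lambda_c)+o_P(1)$. The de-Poissonization also needs right-continuity of $\theta$ at $\lambda_c$, not monotonicity alone; this holds because $\theta=\inf_K\theta_K$ with each $\theta_K$ continuous and nondecreasing. So the claim $p_\infty(\lambda_c)=0$ is exactly the absence of percolation at criticality, $\theta(\lambda_c)=0$. That is proved in $D=2$ by the RSW-type arguments you mention, and in sufficiently high dimension. It is open in general, for instance in $D=3$, just as for lattice percolation on $\mathbb{Z}^3$. So there is no result, Penrose's or anyone else's, to fall back on in general dimension.

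This is an overreach in the theorem as stated rather than a defect in your method. The statement also tacitly needs $D\ge 2$, since for $D=1$ no finite $\lambda_c$ exists. Restricted to $D=2$, the only case the paper uses, your argument is complete.
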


Theorem \labelcref{thm:RGG-Giant} describes the thermodynamic limit to fixed mean degree, $k$, with $k=\pi\lambda$ in $D=2$. A numerical estimate of the critical area fraction $\phi_c\doteq 0.676339$ for penetrable discs in the plane by Quintanilla et al.\ \cite{Quintanilla2000} is equivalent, via $\phi_c = 1-e^{-\pi\lambda_c/4}$ \cite{penrose2003random}, to a critical mean degree of $k_c \doteq 4.51223$. Below this value, the LCC includes a vanishingly small fraction of the nodes; above this value, there is a GCC including $0<p_\infty(\lambda)<1$ fraction of the nodes. 

A second major result characterizes when the graph becomes fully connected. Again, Penrose establishes this first for the Poissonized model and then de-Poissonizes.

\begin{theorem}[Connectivity threshold; Penrose \cite{penrose2003random}]
\label{thm:RGG-CON}
For uniformly distributed points in a $D$-dimensional space, let $r_n > 0$ and suppose $n r_n^D / \ln n \to \alpha \in (0, \infty)$. There exists an $\alpha_c$ s.t.\ if $\alpha > \alpha_c$, then an arbitrary node of $G(X_n, r_n)$ is almost surely in the GCC. If $\alpha < \alpha_c$ the RGG is disconnected with high probability.
\end{theorem}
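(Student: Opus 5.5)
The plan is to reduce connectivity to the absence of isolated vertices, working first in the Poissonized model $G(\mathcal{P}_{\mu_n}, r_n)$ with $\mu_n \sim n$ and then transferring to the binomial model by the coupling arguments mentioned above. Write $\theta_D$ for the volume of the unit ball in $D$ dimensions. The natural candidate is $\alpha_c = 1/\theta_D$. A point is isolated with probability $e^{-\theta_D n r_n^D} = n^{-\theta_D \alpha + o(1)}$, so the expected number of isolated vertices is $n^{1-\theta_D\alpha+o(1)}$. This quantity tends to $\infty$ or to $0$ according to whether $\alpha < \alpha_c$ or $\alpha > \alpha_c$.

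\emph{Subcritical case} ($\alpha<\alpha_c$). We show that with high probability there is at least one isolated vertex, so the graph is disconnected. Let $N_0$ be the number of isolated vertices; the first-moment computation above gives $\mathbb{E}N_0 \to \infty$. We then bound the second moment via the Mecke formula. Pairs of isolated points at distance greater than $2r_n$ are almost independent. Pairs at distance between $r_n$ and $2r_n$ have a union of exclusion balls of volume at least $(1+c)\theta_D r_n^D$, so they contribute $o((\mathbb{E}N_0)^2)$. Chebyshev then gives $N_0>0$ with high probability.

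\emph{Supercritical case} ($\alpha>\alpha_c$). We show that the graph is connected with high probability, which implies that an arbitrary node lies in the giant component. The argument has two parts.
\begin{enumerate}
\item Isolated vertices are ruled out by the first-moment bound $\mathbb{E}N_0\to0$.
\item Small components of more than one vertex but diameter at most $K r_n$ are ruled out as well. Such a component needs an empty annulus around it whose volume exceeds $\theta_D r_n^D$ by a fixed factor. This follows from an isoperimetric-type lower bound on the volume of the $r_n$-neighbourhood of a set in terms of its diameter. A union bound over discretized configurations then gives probability $n \cdot n^{-(1+\delta)\theta_D\alpha}\cdot \mathrm{polylog}\to0$.
\end{enumerate}
Components of large diameter are handled by discretizing the torus into cubes of side $\epsilon r_n$. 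Every cube is then occupied with high probability, because $n r_n^D \gg \ln n$. Occupied cubes form a connected lattice that contains every large component, so any two large components must merge.

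The main obstacle is the intermediate-scale step: uniformly controlling components whose diameter is between a few $r_n$ and the scale at which the cube-occupancy argument applies. Here I would first try Penrose's isoperimetric inequality on the discretized lattice. The key estimate is that a lattice animal of size $m$ has a boundary of order $m^{(D-1)/D}$ empty cubes. The number of such animals grows only exponentially in $m$, while the probability that their boundaries are empty decays like $n^{-c\,m^{(D-1)/D}}$, so the sum over animals is negligible. Finally, de-Poissonization proceeds by sandwiching $X_n$ between Poisson processes of intensities $n(1\pm n^{-1/3})$. Connectivity is not monotone, but the absence of isolated vertices and the lattice-crossing events are controllable, so the transfer goes through.
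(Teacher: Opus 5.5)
\textbf{There is a genuine gap in the supercritical half.} Your subcritical half is sound for $D\ge 2$, with $\alpha_c=1/\theta_D$: the first and second moments of $N_0$ behave as you say, and Chebyshev gives an isolated vertex with high probability. (The paper itself gives no proof of this statement; it only cites Penrose.)

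The supercritical half breaks at the step that is supposed to merge large components. You assert that every cube of side $\epsilon r_n$ is occupied with high probability ``because $nr_n^D\gg\ln n$''. But the hypothesis is $nr_n^D\sim\alpha\ln n$. A given cube is empty with probability $(1-\epsilon^D r_n^D)^n=n^{-\epsilon^D\alpha(1+o(1))}$, and there are $\asymp n/\ln n$ cubes. So all cubes are occupied with high probability only if $\epsilon^D\alpha>1$. The occupancy argument, however, needs $\epsilon\sqrt{D+3}<1$ so that points in face-adjacent cubes are joined. In $D=2$ this forces $\alpha>5$, far above $\alpha_c=1/\pi$, and for $\alpha$ near $\alpha_c$ the expected number of empty cubes tends to infinity.

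Hence there is no scale at which occupancy takes over, and your intermediate-scale step must control cube sets of every size up to $\asymp n/\ln n$. As formulated it cannot. If you sum over lattice animals indexed by their size $m$, the terms $C^m n^{-c\,m^{(D-1)/D}}$ stop being small once $m^{1/D}\ln C>c\ln n$, i.e.\ for $m$ beyond order $(\ln n)^D$. At that point the entropy of the animal overwhelms the emptiness probability of its boundary.

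\textbf{The missing idea} is a Peierls count over the separating set rather than over the animal.
\begin{enumerate}
\item Take $\epsilon$ small, depending only on $D$, and cover the $r_n/2$-neighbourhood of a component by cubes of side $\epsilon r_n$. This cube set is face-connected, and every cube on its outer boundary is empty, since a point there would lie within $r_n$ of the component.
\item If two components both have diameter larger than $Kr_n$, the boundary between them contains a $*$-connected set of at least $b_0$ empty cubes. This uses lattice isoperimetry together with the $*$-connectivity of outer boundaries, with $K$ chosen large given $b_0$.
\item There are at most $C^b$ $*$-connected sets of $b$ cubes through a given cube. So the probability of two such components is at most $\frac{n}{\ln n}\sum_{b\ge b_0}\bigl(C n^{-\epsilon^D\alpha(1+o(1))}\bigr)^b$, which tends to $0$ once $b_0\epsilon^D\alpha>1$.
\end{enumerate}
Since $b_0$ is a constant, this works for every $\alpha>0$; the threshold enters only through components of diameter at most $Kr_n$.

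For those small components, your ``fixed factor'' claim is false when the diameter is $o(r_n)$: two points at distance $s$ give $|B(x,r_n)\cup B(y,r_n)|=\theta_D r_n^D+O(s\,r_n^{D-1})$. It is also not needed. Because $\theta_D\alpha>1$ strictly, discretize at a finer scale $\epsilon' r_n$ and apply Brunn--Minkowski. This gives an empty region of volume at least $(1-c_D\epsilon')\theta_D r_n^D$ with $(1-c_D\epsilon')\theta_D\alpha>1$, summed over $O(n/\ln n)$ anchors and boundedly many shapes. A genuine excess volume matters only in the sharp $\ln n+c$ window.

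These union bounds hold directly in the binomial model, where a fixed region of volume $V$ is empty with probability $(1-V)^n\le e^{-nV}$. So you can drop de-Poissonization entirely.

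Two lesser caveats. First, the reduction to isolated vertices needs $D\ge 2$: on the circle, long gaps disconnect the graph first and the threshold is $nr_n/\ln n\to 1$, not $1/\theta_1=1/2$. Second, what you establish is `with high probability', not the `almost surely' written in the statement.
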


Summarizing for $D=2$, wherein $\alpha_c=1/\pi$, the above theorems identify three connectivity regimes in the thermodynamic limit:
\smallskip
\begin{enumerate}
    \item RGGs are \emph{subcritical} if $(n-1)\pi r_n^2 < k_c$. The mean degree is below the percolation threshold so no GCC exists in the thermodynamic limit and the graph consists of small disconnected components.
    \item RGGs are \emph{supercritical} if $(n-1)\pi r_n^2 > k_c$. The mean degree exceeds the percolation threshold and the GCC includes a positive fraction of vertices.
    \item RGGs are \emph{fully connected} if $(n-1)\pi r_n^2 > \ln n$. The graph is connected with high probability. If $(n-1)\pi r_n^2 \sim \ln n$ and $(n-1)\pi r_n^2/\ln n >1$, we will refer to this subregime as \emph{sharply fully connected}.
\end{enumerate}

\subsection{Commute Times and RGGs}

Having established the three connectivity regimes for RGGs, we briefly review the known behavior of commute times in each, starting with the \emph{fully connected} regime where the most relevant result for our current study is due to von Luxburg et al.~\cite{vonLuxburg2014}: \emph{``Loosely speaking, the convergence results say that whenever the graph is reasonably large, the degrees are not too small, and the bottleneck is not too extreme, then the commute distance between two vertices can be approximated by the sum of their inverse degrees.''} (See also \cite{radl2009}.) That is, in very dense RGGs, the commute time $C_{uv}$ degenerates in the sense of losing sensitivity to global structures of the graph and embedding space, such as clusters, bottlenecks, and details of long-range paths. We restate the result most relevant to our current study, as restricted to our focus on uniformly distributed points on $\mathbb{T}^2$, stressing that the theorems in \cite{vonLuxburg2014} are much more general than considered here.

\begin{theorem}[Theorem 3 of von Luxburg et al.\ \cite{vonLuxburg2014} as applied to uniformly distributed points on $\mathbb{T}^2$]\label{thm:vonLuxburg} Let $G_n = (V_n, E_n)$ be an RGG with radius of connection $r_n$ from $n$ uniformly distributed points on $\mathbb{T}^2$ with vertex degrees $D_u$ and graph volume $\mathrm{Vol}(G_n) = \sum_{u\in V_n} D_u$. Suppose that the fixed pair of vertices $u$ and $v$ at positions $X_u$ and $X_v$ are separated by Euclidean distance $d(X_u,X_v) > 8r_n$. Assuming that $n r_n^2 / \ln n \to \infty$ as $n\to\infty$, there exists a constant $c$ such that the commute distance satisfies, with probability converging to $1$, 
\[
nr_n^2\left\vert\frac{1}{\mathrm{Vol}(G_n)}C_{uv} -
\left(\frac{1}{D_u}+\frac{1}{D_v}\right)\right| \leq c/nr_n^3\,.
\]
Therefore, further requiring that $r_n\to 0$ and $nr_n^3\to\infty$ yields
\[
C_{uv} \;\to\; \mathrm{Vol}(G_n)\left(\frac{1}{D_u} + \frac{1}{D_v}\right)\qquad almost\;surely\,.
\]
\end{theorem}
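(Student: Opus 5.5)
This statement is a specialization of Theorem 3 of von Luxburg et al.\ \cite{vonLuxburg2014}, so the plan is to check that the torus setting meets that theorem's hypotheses rather than to re-derive the bound from scratch. I will still lay out the underlying mechanism, both so the hypotheses can be checked properly and because the later geometric model is built on it. The approach is electrical. Write $C_{uv}/\mathrm{Vol}(G_n) = E_{uv}$ and bound the effective resistance from both sides.

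\textbf{Lower bound.} Apply Rayleigh monotonicity: short all vertices except $u$ and $v$ into a single node. This leaves $D_u$ unit resistors in parallel from $u$ to the merged node and $D_v$ from the merged node to $v$. Hence $E_{uv}\ge 1/D_u+1/D_v$, up to a correction from edges joining $u$ and $v$ directly. There are no such edges, because $d(X_u,X_v)>8r_n$.

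\textbf{Upper bound.} Use Thomson's principle. Construct an explicit unit flow from $u$ to $v$ whose energy is $1/D_u+1/D_v+O(1/(n^2r_n^5))$. Multiplying by $nr_n^2$ then gives the stated error $c/(nr_n^3)$. The flow spreads evenly over the $D_u$ neighbours of $u$, then over the second-neighbourhood annulus, and then travels along a bundle of parallel paths across the torus, using a grid of cells of side about $r_n$, before contracting into $v$ symmetrically.

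The needed concentration comes from $nr_n^2/\ln n\to\infty$. With a Chernoff bound and a union bound over the $O(r_n^{-2})$ grid cells, every cell holds $\Theta(nr_n^2)$ points with high probability, and every degree is $\Theta(nr_n^2)$. With these estimates the energy of each stage can be computed as follows.
\begin{itemize}
\item The first hop contributes exactly $1/D_u$.
\item The second hop has energy $O(1/(nr_n^2)^2)$ per unit flow.
\item The bulk flow through cells has energy of order $(1/(n r_n^2)^2)\cdot$ (a logarithmic spreading factor), which is $O(1/(n^2 r_n^5))$ after accounting for the number of edges between adjacent cells, about $(nr_n^2)^2$.
\end{itemize}
The separation $d>8r_n$ guarantees that the two neighbourhoods, and the annuli built around them, do not overlap. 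This keeps the star contributions at $u$ and at $v$ additive.

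The \textbf{main obstacle} is the bulk stage. The flow must be routed so that its energy is negligible relative to $1/D_u$ and not merely bounded. In two dimensions the resistance of a spreading flow grows logarithmically, so the cell construction has to use the full $(nr_n^2)^2$-fold edge multiplicity between cells; this is where the factor $1/r_n^3$ in the error comes from. Here I would simply invoke the flow construction of \cite{vonLuxburg2014} for geometric graphs, after verifying its valid-region and bottleneck conditions. On $\mathbb{T}^2$ these conditions are immediate: there is no boundary, the density is uniform, and the domain is a smooth compact manifold.

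For the final limit, take $r_n\to0$ and $nr_n^3\to\infty$. Then $c/(nr_n^3)\to0$, and since $nr_n^2 (1/D_u+1/D_v)$ stays bounded away from $0$ and $\infty$, the relative error in $C_{uv}$ vanishes. This gives the stated convergence of $C_{uv}$ to $\mathrm{Vol}(G_n)(1/D_u+1/D_v)$. The convergence is in probability, and it upgrades to almost sure along sequences where the failure probabilities of the concentration events are summable, which holds here since they decay like $n^{-\omega(1)}$.
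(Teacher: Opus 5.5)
Your proposal is correct and follows essentially the same route as the paper, which gives no independent proof and simply imports Theorem 3 of von Luxburg et al.\ for $\mathbb{T}^2$. Your Rayleigh-shorting lower bound $E_{uv}\ge 1/D_u+1/D_v$ and Thomson-flow upper bound are the mechanism behind that cited result, and your Borel--Cantelli upgrade to almost sure convergence is sound because the failure probabilities are $n^{-\omega(1)}$. Two small caveats do not affect validity: a genuinely spreading flow in two dimensions would give the sharper error $O(\ln(1/r_n)/(nr_n^2))$, so the $c/(nr_n^3)$ rate comes from cruder routing along $O(1/r_n)$ cell-hops of resistance $O((nr_n^2)^{-2})$ each, not from the logarithmic factor; and $\mathbb{T}^2$ is not literally a ``valid region'' in $\mathbb{R}^2$, so invoking the citation really means rerunning the (boundary-free, hence easier) argument on the torus, a step the paper also leaves implicit.
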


That is, under these dense-graph conditions, the limiting behavior of the commute time $C_{uv}$ depends only on the sum of the inverses of degrees $D_u$ and $D_v$. However, the conditions of von Luxburg et al.'s theorem are not satisfied in the sharply fully connected setting $(n-1)\pi r_n^2 \sim \ln n$, let alone on the GCC in the supercritical (but not fully connected) regime. Indeed, the nature of the relationship between commute time and Euclidean distance in these other regimes appears to be an open question.

At the other extreme, the \emph{subcritical} regime, the graph is fragmented into small disconnected components, and the commute time between vertices in distinct components is infinite while the commute time between two nodes in the same connected component is necessarily subject to the structural details of that component.

Between these extremes, including the \emph{supercritical} (but not fully connected) and the \emph{sharply fully connected} regimes, we presume that the limiting behavior of the commute times should be impacted by both local degree information and spatial properties of the graph in its embedding region. But to the best of our knowledge, no such characterization of commute times for RGGs in either of these regimes exists. The central motivating goal of the present work is to explore the asymptotic behavior of commute times on the largest connected components of RGGs in the \emph{supercritical} regime, to understand the relationship between random-walk-based distances and Euclidean geometry in large, spatially embedded networks.

The rest of this paper is organized as follows. In \autoref{sec:res-com}, we review the essential definitions and computations for hitting times and commute times (and, thereby, effective resistances) in terms of the graph Laplacian. In \autoref{sec:model}, we identify different terms we expect to appear in formulae for effective resistance in spatially-homogenized and square-lattice geometries. As described in \autoref{sec:numerics}, we then generate large ensembles of RGGs across a range of graph sizes $n$ and expected mean degrees $k$, compute commute times on the LCCs, and fit regression models to estimate the dependence on geometrical and local graph features. To further understand how the geometry of the torus influences our results in a more controlled setting, in \autoref{sec:perturb} we examine how commute times change under perturbations of square lattices embedded in $\mathbb{T}^2$. We conclude in \autoref{sec:conclusions} with thoughts for possible future directions.

\section{Commute Times and Effective Resistances}
\label{sec:res-com}

For concreteness, we briefly review how to compute hitting times and commute times on connected weighted undirected graphs in terms of the graph Laplacian matrix and the formal relationship between commute times and effective resistances.

\subsection{Computation of Hitting Times and Commute Times}
\label{subsec:computation}
Let $G = (V, E, w)$ be an undirected, connected, simple weighted graph with vertices (also called `nodes') $V$, edges $E$, and positive weights $w \in \mathbb{R}_{>0}^{|E|}$. Let $\mathbf{A}$ be the weighted adjacency matrix of $G$, with entry $A_{uv}$ equal to the weight $w_{(u,v)}$ of the edge $(u,v)$ between $u$ and $v$ (if present) and $0$ otherwise. Define $\mathbf{D}$ as the diagonal matrix of the weighted degrees of $G$ with ${D}_{uu}=\deg(u)\equiv\sum_v A_{uv}$ and $D_{uv}=0$ for $u\neq v$, emphasizing that we use $\deg(u)$ to be the \emph{weighted} degree of $u$. For notational convenience, let $\vec{\mathbf{1}}_u$ be the (column) vector of $0$s except a single $1$ in the $u$th entry and $\vec{\mathbf{D}}$ be the vector of weighted degrees (i.e., $D_u=D_{uu}$). Unweighted graphs are treated by setting weights to $1$. The following results and arguments are reproduced from Sidford \cite{sidford2018lecture13} with only minor changes in notation. 

\begin{definition}[Random Walk]
A random sequence $X_1, ..., X_k$ of vertices in $G$ is a random walk if independently for all $i \in [k-1]$:
\begin{equation*}
    \Pr\left(X_{i+1} = v \mid X_i = u\right) = 
    \begin{cases} 
        \frac{w_{(u,v)}}{\deg(u)} & \text{if } (u,v) \in E\,, \\
        0 & \text{otherwise}.
    \end{cases}
\end{equation*}
Furthermore, let $\mathbf{W} = \mathbf{A}\mathbf{D}^{-1}$ so that 
$\mathbf{W}_{vu} = \Pr\left(X_{i+1} = v \mid X_i = u\right)$. Then
$\mathbf{W}\vec{\mathbf{1}}_u$ is the distribution of vertex locations after one step of a random walk starting from $u$.
\end{definition}

\begin{definition}[Hitting Time]\label{def:hitting}
Define the hitting time $H_{uv}$ between nodes $u$ and $v$ to be the expected number of steps for a random walk started at $u$ to first reach $v$, i.e., $H_{uv} \equiv \mathbb{E}\!\left[\,\min\{t \ge 0 : X_t = v\} \,\middle|\, X_0 = u\right]$. Note that $H_{uu} = 0$.
\end{definition}

With a goal of defining a distance between nodes, and recognizing that $H_{uv}$ and $H_{vu}$ are not in general equal to one another, one can simply obtain a symmetric measure between a pair of nodes by summing the two hitting times.

\begin{definition}[Commute Time]\label{def:commute}
Define the commute time between nodes $u$ and $v$ as $C_{uv} = H_{uv}+H_{vu}$, i.e., the expected number of steps for a random walk started at $u$ to first reach $v$ and then return to $u$. Unlike hitting times, commute times are symmetric by construction: $C_{uv} = C_{vu}$.
\end{definition}

\begin{definition}[Graph Laplacian and its Pseudoinverse] Define the (combinatorial) graph Laplacian matrix of $G$ as $\mathbf{L} \equiv \mathbf{D}-\mathbf{A}$. Additionally, we use the notation $\mathbf{L}^+$ for the Moore--Penrose pseudoinverse of the graph Laplacian. 
\end{definition}

\begin{theorem}[Hitting Time Calculation]\label{thm:hit_tim} 
For any $u,v\in V$, the hitting time $H_{uv}$ from $u$ to $v$ satisfies
\begin{equation*}H_{uv} = (\vec{\mathbf{1}}_u - \vec{\mathbf{1}}_v)^\top \mathbf{L}^+ (\vec{\mathbf{D}} - 
\mathrm{Vol}(G)\, \vec{\mathbf{1}}_v).
\end{equation*}
\end{theorem}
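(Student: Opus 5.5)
We fix the target $v$ and treat the vector $\vec{\mathbf{h}}$ with entries $h_u \equiv H_{uv}$ as the unknown. The plan has three steps: write down the linear system that $\vec{\mathbf{h}}$ satisfies, express it as a Laplacian system, and solve that system with $\mathbf{L}^+$.

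\emph{Step 1: the one-step recurrence.} Conditioning on the first step of the walk gives, for every $u\neq v$,
\[
h_u = 1 + \sum_{w} \frac{A_{uw}}{\deg(u)}\, h_w .
\]
Multiplying by $\deg(u)$ yields
\[
(\mathbf{L}\vec{\mathbf{h}})_u = D_u \qquad (u\neq v),
\]
and we also have $h_v=0$.

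\emph{Step 2: the equation at $v$.} Every column of $\mathbf{L}$ sums to zero, so $\vec{\mathbf{1}}^\top \mathbf{L}\vec{\mathbf{h}}=0$. This forces
\[
(\mathbf{L}\vec{\mathbf{h}})_v = -\sum_{u\neq v} D_u = D_v - \mathrm{Vol}(G).
\]
Combining the two cases gives the single system
\[
\mathbf{L}\vec{\mathbf{h}} = \vec{\mathbf{b}}, \qquad \vec{\mathbf{b}} \equiv \vec{\mathbf{D}} - \mathrm{Vol}(G)\,\vec{\mathbf{1}}_v .
\]
The entries of $\vec{\mathbf{b}}$ sum to zero, so $\vec{\mathbf{b}}$ is orthogonal to $\vec{\mathbf{1}}$.

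\emph{Step 3: inverting with the pseudoinverse.} Since $G$ is connected, $\ker\mathbf{L}=\mathrm{span}(\vec{\mathbf{1}})$. Because $\mathbf{L}$ is symmetric, $\mathbf{L}\mathbf{L}^+$ is the orthogonal projector onto $\vec{\mathbf{1}}^\perp$. Hence $\mathbf{L}(\mathbf{L}^+\vec{\mathbf{b}})=\vec{\mathbf{b}}$, and every solution of the system has the form
\[
\vec{\mathbf{h}} = \mathbf{L}^+\vec{\mathbf{b}} + c\,\vec{\mathbf{1}}
\]
for some scalar $c$. The constant is removed by taking differences: $h_u = h_u - h_v = (\vec{\mathbf{1}}_u-\vec{\mathbf{1}}_v)^\top \vec{\mathbf{h}}$, and the $c\,\vec{\mathbf{1}}$ term is annihilated by $\vec{\mathbf{1}}_u-\vec{\mathbf{1}}_v$. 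This gives exactly the formula in \cref{thm:hit_tim}.

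\emph{Main obstacle.} The delicate point is the justification in Step 3 that the true hitting-time vector is one of these solutions and that the solutions differ only by constants. Two facts are needed. First, $H_{uv}$ is finite, because the walk on a finite connected graph hits $v$ almost surely with finite expected time; so $\vec{\mathbf{h}}$ genuinely satisfies the recurrence. Second, the homogeneous system $\mathbf{L}\vec{\mathbf{x}}=0$ has only constant solutions, which follows from $\vec{\mathbf{x}}^\top\mathbf{L}\vec{\mathbf{x}}=\sum_{(a,b)\in E} w_{(a,b)}(x_a-x_b)^2$ together with connectivity. Alternatively, uniqueness can be argued via the maximum principle for harmonic functions with the boundary condition $h_v=0$.
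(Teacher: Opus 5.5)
Your proof is correct and follows essentially the same route as the paper: you derive $\mathbf{L}\vec{\mathbf{h}} = \vec{\mathbf{D}} - \mathrm{Vol}(G)\vec{\mathbf{1}}_v$ from the one-step recurrence, use $\vec{\mathbf{1}}^\top\mathbf{L} = 0$ to pin down the entry at $v$, and then eliminate the additive constant by differencing against $h_v = 0$. The paper fixes the $v$-entry through an auxiliary constant $c$ determined by orthogonality to $\vec{\mathbf{1}}$, whereas you read it off directly from the zero column sums; your explicit justifications that $H_{uv}$ is finite and that $\ker\mathbf{L} = \mathrm{span}(\vec{\mathbf{1}})$ are points the paper assumes or cites.
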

\begin{proof}
This proof relies on the recursive fact that for any distinct vertices $u,v \in V$, the hitting time from $u$ to $v$ is $1$ plus the weighted expected hitting time from the neighboring vertices of $u$ to $v$. This can be written as:
\begin{equation}
\label{eq:hitting-time-recursive}
H_{uv} = 1 + \sum_{a\in \mathcal{N}(u)} \mathbf{W}_{au}H_{av}\,.
\end{equation}
For ease of argument, the following notation is introduced, fixing $v\in V$ and defining $\vec{\mathbf{h}}\in \mathbb{R}^V$ to be such that, for all $a \in V$, ${h}_a = H_{av}$. Notice ${h}_v = 0$. Using the new notation, \cref{eq:hitting-time-recursive} can be rewritten for $u \neq v$ as 
\begin{equation}
\label{eq:hitting-time-recursive-new-notation}
\vec{\mathbf{h}}^\top \vec{\mathbf{1}}_u = 1 + \vec{\mathbf{h}}^\top \mathbf{W} \vec{\mathbf{1}}_u\,.
\end{equation}
Define $c\equiv - \vec{\mathbf{h}}^\top \mathbf{W} \vec{\mathbf{1}}_v$ and then notice that
\begin{equation}
\label{eq:hitting-time-recursive-new-notation-t}
\vec{\mathbf{h}}^\top \vec{\mathbf{1}}_v = c + \vec{\mathbf{h}}^\top \mathbf{W} \vec{\mathbf{1}}_v\,.
\end{equation}
This constraint will be used later to define a system to solve for $\vec{\mathbf{h}}$. Combining equations \cref{eq:hitting-time-recursive-new-notation} and \cref{eq:hitting-time-recursive-new-notation-t} into matrix form we have
\begin{equation}
\label{eq:hitting-time-recursive-matrix-form}
(\mathbf{I} - \mathbf{W}^\top)\vec{\mathbf{h}} = \vec{\mathbf{1}} - (1-c)\vec{\mathbf{1}}_v
\end{equation}
where $\vec{\mathbf{1}}$ is the vector of all $1$s.
Recalling that $\mathbf{A}=\mathbf{A}^\top$ for an undirected graph, $\mathbf{D}(\mathbf{I} - \mathbf{W}^\top) = \mathbf{D}(\mathbf{I} - \mathbf{D}^{-1}\mathbf{A}) = \mathbf{L}$, from which \cref{eq:hitting-time-recursive-matrix-form} gives
\begin{equation}
\label{eq:hitting-time-recursive-matrix-form-2}
\mathbf{L}\vec{\mathbf{h}} = \vec{\mathbf{D}} - {D}_v(1-c)\vec{\mathbf{1}}_v\,.
\end{equation}
In order to then compute $\vec{\mathbf{h}}$, recall that since $G$ is connected the kernel of $\mathbf{L}$ is $\ker(\mathbf{L}) = \text{span}(\vec{\mathbf{1}})$ \cite{chung1997spectral} and, since $\mathbf{L}$ is symmetric, $\text{im}(\mathbf{L})$ is the orthogonal complement of $\ker(\mathbf{L})=\text{span}(\vec{\mathbf{1}})$, so \cref{eq:hitting-time-recursive-matrix-form-2} gives
\begin{equation}
\mathbf{L}\vec{\mathbf{h}} = \vec{\mathbf{D}} - {D}_v(1-c)\vec{\mathbf{1}}_v \;\Rightarrow\; 0 = \vec{\mathbf{1}}^\top(\vec{\mathbf{D}} - {D}_v(1-c)\vec{\mathbf{1}}_v) \;\Rightarrow\; {D}_v(1-c) = \|\vec{\mathbf{D}}\|_1\,,
\end{equation}
where we note our notation includes the following trivial identities for the sum of the weighted degrees:
\begin{equation}
\mathrm{Vol}(G) = \sum_u D_u = \|\vec{\mathbf{D}}\|_1 = \mathrm{Tr}(\mathbf{D})\,.
\end{equation}
Then, again using $\ker(\mathbf{L})=\text{span}(\vec{\mathbf{1}})$, for any $\vec{\mathbf{x}}\in \mathbb{R}^V$ satisfying 
\begin{equation}
\label{eq:linearsystem_x}
\mathbf{L}\vec{\mathbf{x}} = \vec{\mathbf{D}}-\mathrm{Vol}(G)\vec{\mathbf{1}}_v    
\end{equation} 
there exists some $c\in \mathbb{R}$ such that $\vec{\mathbf{h}} = \vec{\mathbf{x}} + c\vec{\mathbf{1}}$. Recalling that ${h}_v = 0$ then requires $c=-\mathbf{x}_v$. 
Finally, setting $\vec{\mathbf{x}}= \mathbf{L}^+(\vec{\mathbf{D}} - \mathrm{Vol}(G)\vec{\mathbf{1}}_v)$ gives
\begin{align}
\label{eq:hitting_x}
H_{uv} = \vec{\mathbf{1}}_u^\top \vec{\mathbf{h}} = \vec{\mathbf{1}}_u^\top(\vec{\mathbf{x}}-x_v \vec{\mathbf{1}}) &= x_u - x_v \\ &= (\vec{\mathbf{1}}_u - \vec{\mathbf{1}}_v)^\top \mathbf{L}^+(\vec{\mathbf{D}} - \mathrm{Vol}(G)\vec{\mathbf{1}}_v)\,.
\end{align}
\end{proof}

\begin{theorem}[Commute Time Calculation]\label{thm:com_tim} 
For any $u,v\in V$, the commute time $C_{uv}$ between $u$ and $v$ satisfies
\begin{equation*}
C_{uv} = \mathrm{Vol}(G) (\vec{\mathbf{1}}_u-\vec{\mathbf{1}}_v)^\top \mathbf{L}^+(\vec{\mathbf{1}}_u-\vec{\mathbf{1}}_v) = \mathrm{Vol}(G) \left({L}^+_{uu} + {L}^+_{vv} - 2{L}^+_{uv}\right) \,.
\end{equation*}
\end{theorem}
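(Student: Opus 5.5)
The plan is to derive the commute time formula directly from \cref{thm:hit_tim} by adding the two hitting times and watching the degree-vector terms cancel. Write $\vec{\mathbf{b}}_{uv} \equiv \vec{\mathbf{1}}_u - \vec{\mathbf{1}}_v$ and apply \cref{thm:hit_tim} twice. The first application gives
\[
H_{uv} = \vec{\mathbf{b}}_{uv}^\top \mathbf{L}^+ \bigl(\vec{\mathbf{D}} - \mathrm{Vol}(G)\vec{\mathbf{1}}_v\bigr).
\]
The second application, with the roles of $u$ and $v$ exchanged, gives
\[
H_{vu} = \vec{\mathbf{b}}_{vu}^\top \mathbf{L}^+ \bigl(\vec{\mathbf{D}} - \mathrm{Vol}(G)\vec{\mathbf{1}}_u\bigr) = -\vec{\mathbf{b}}_{uv}^\top \mathbf{L}^+ \bigl(\vec{\mathbf{D}} - \mathrm{Vol}(G)\vec{\mathbf{1}}_u\bigr).
\]

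Summing these, the terms involving $\vec{\mathbf{D}}$ appear with opposite signs, $\vec{\mathbf{b}}_{uv}^\top \mathbf{L}^+\vec{\mathbf{D}} - \vec{\mathbf{b}}_{uv}^\top \mathbf{L}^+\vec{\mathbf{D}}$, and cancel exactly. This cancellation is the key point. What remains is
\[
C_{uv} = \mathrm{Vol}(G)\,\vec{\mathbf{b}}_{uv}^\top \mathbf{L}^+ \bigl(\vec{\mathbf{1}}_u - \vec{\mathbf{1}}_v\bigr) = \mathrm{Vol}(G)\,\vec{\mathbf{b}}_{uv}^\top \mathbf{L}^+ \vec{\mathbf{b}}_{uv}.
\]
This is the first claimed identity. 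For the case $u=v$, both sides vanish trivially.

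For the second identity, I expand the quadratic form entrywise:
\[
\vec{\mathbf{b}}_{uv}^\top \mathbf{L}^+ \vec{\mathbf{b}}_{uv} = L^+_{uu} - L^+_{uv} - L^+_{vu} + L^+_{vv}.
\]
To combine the cross terms, I use that $\mathbf{L}$ is symmetric, since $\mathbf{A}=\mathbf{A}^\top$ for an undirected graph. The Moore--Penrose pseudoinverse of a symmetric matrix is symmetric; one can see this from the spectral decomposition, where $\mathbf{L}^+$ inverts the nonzero eigenvalues in the same orthonormal eigenbasis. Hence $L^+_{vu} = L^+_{uv}$, and the expression becomes $L^+_{uu}+L^+_{vv}-2L^+_{uv}$.

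There is no real obstacle in this argument. The only points needing care are getting the sign right when swapping $u$ and $v$, and justifying the symmetry of $\mathbf{L}^+$. All the substantive work, such as solvability of the Laplacian system and the kernel being $\mathrm{span}(\vec{\mathbf{1}})$, was already handled in the proof of \cref{thm:hit_tim}.
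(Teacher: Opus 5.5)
Your proof is correct and takes the same route as the paper, which only says the result ``follows directly from \cref{thm:hit_tim}.'' Your computation fills in that step: sum $H_{uv}$ and $H_{vu}$, note that the $\vec{\mathbf{D}}$ terms cancel, and then use the symmetry of $\mathbf{L}^+$ to get the entrywise form.
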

\begin{proof}
The result follows directly from \cref{thm:hit_tim}.
\end{proof}

We note that the calculation of $\mathbf{L}^+$ is streamlined by the properties of $\mathbf{L}$ as a real symmetric matrix (specifically, that its eigenvectors are orthogonal) and once again relying on $\ker(\mathbf{L})=\text{span}(\vec{\mathbf{1}})$ for $G$ a single connected component. Then one can simply shift $\mathbf{L}$ by a multiple of the matrix of all $1$s, $\mathbf{J}=\vec{\mathbf{1}}\vec{\mathbf{1}}^\top$, to shift the $0$ eigenvalue without changing any of the other eigenvalue/vector pairs (by orthogonality), take a (regular) inverse, and then un-do the shift \cite{bonchi2012}. Noting that $\frac{1}{n}\mathbf{J}\vec{\mathbf{1}} = \vec{\mathbf{1}}$ then yields
\begin{equation}
    \mathbf{L}^+ = \left( \mathbf{L}+\frac{1}{n}\mathbf{J} \right)^{-1} -\; \frac{1}{n}\mathbf{J}\,,
\end{equation}
so that $\mathbf{L}^+\vec{\mathbf{1}}=\vec{\mathbf{0}}$ as required. While this full inverse calculation gives all pairwise commute times, it nevertheless requires $O(n^3)$ operations to calculate and $O(n^2)$ memory to hold the full $\mathbf{L}^+$. An alternative calculation for a single pair is to perform the sparse $O(n)$ linear solve in \cref{eq:linearsystem_x}, use \cref{eq:hitting_x} for $H_{uv}$, and repeat swapping $u$ and $v$; but this $O(n)$ calculation for each unique sink node must be recalculated to obtain the commute time between other nodes. (See \cite{bonchi2012} for further discussion.)

\subsection{Relating Effective Resistance to Commute Time} The following results are compiled from \cite{chandra1996electrical}, \cite{doyle1984random}, and \cite{spielman2019spectral}. 

\begin{definition}[Effective Resistance]
\label{def:eff-res}
Let $u, v \in V$. Interpret $G$ as an electrical network where each edge 
$(i,j) \in E$ is a resistor with resistance $w_{ij}^{-1}$. Inject one unit of current at $u$ and extract one unit at $v$, so the current demand vector is:
\begin{equation*}
\vec{\mathbf{b}} = \vec{\mathbf{1}}_u - \vec{\mathbf{1}}_v.
\end{equation*}
To see how the graph Laplacian $\mathbf{L}$ governs this network, note that when assigning voltages $\vec{\boldsymbol{\phi}} \in \mathbb{R}^V$ to the vertices, Ohm's law gives the current flowing from vertex $i$ to vertex $j$ as $f_{ij} = w_{ij}(\vec{\boldsymbol{\phi}}_i - \vec{\boldsymbol{\phi}}_j)$. Kirchhoff's current law then requires that the net current flowing out of each vertex $i$ equals the external demand $\mathbf{b}_i$:
\begin{equation*}
\sum_{j \in \mathcal{N}(i)} w_{ij}(\vec{\boldsymbol{\phi}}_i - \vec{\boldsymbol{\phi}}_j) = \mathbf{b}_i \quad \forall\, 
i \in V.
\end{equation*}
The left-hand side is precisely $[\mathbf{L}\vec{\boldsymbol{\phi}}]_i$ since 
$\mathbf{L} = \mathbf{D} - \mathbf{A}$, so Ohm's law and Kirchhoff's current 
law together reduce to the Laplacian system
\begin{equation*}
\mathbf{L}\vec{\boldsymbol{\phi}} = \vec{\mathbf{b}}  = \vec{\mathbf{1}}_u - \vec{\mathbf{1}}_v.
\end{equation*}
Since $\vec{\mathbf{b}} \perp \vec{\mathbf{1}}$ and $\ker(\mathbf{L}) = 
\text{span}(\vec{\mathbf{1}})$, similarly to \cref{thm:hit_tim} this system has a solution given by 
$\vec{\boldsymbol{\phi}} = \mathbf{L}^+(\vec{\mathbf{1}}_u - \vec{\mathbf{1}}_v)$. 
The effective resistance is then defined as the resulting voltage difference 
between $u$ and $v$:
\begin{equation*}
E_{uv} \equiv \vec{\boldsymbol{\phi}}_u - \vec{\boldsymbol{\phi}}_v = (\vec{\mathbf{1}}_u - \vec{\mathbf{1}}_v)^\top 
\vec{\boldsymbol{\phi}} = (\vec{\mathbf{1}}_u - \vec{\mathbf{1}}_v)^\top \mathbf{L}^+ 
(\vec{\mathbf{1}}_u - \vec{\mathbf{1}}_v).
\end{equation*}
\end{definition}

\begin{theorem}[Commute Time and Effective Resistance]\label{thm:com_res}
For any $u, v \in V$, the commute time and effective resistance are related by
\begin{equation*}
C_{uv} = \mathrm{Vol}(G)\, E_{uv}\,.
\end{equation*}
\end{theorem}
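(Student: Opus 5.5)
The result should follow by direct comparison of the two formulas already established. \cref{thm:com_tim} gives
\[
C_{uv} = \mathrm{Vol}(G)\,(\vec{\mathbf{1}}_u-\vec{\mathbf{1}}_v)^\top \mathbf{L}^+(\vec{\mathbf{1}}_u-\vec{\mathbf{1}}_v),
\]
and \cref{def:eff-res} defines $E_{uv} = (\vec{\mathbf{1}}_u-\vec{\mathbf{1}}_v)^\top \mathbf{L}^+(\vec{\mathbf{1}}_u-\vec{\mathbf{1}}_v)$. Substituting the second into the first gives $C_{uv}=\mathrm{Vol}(G)\,E_{uv}$. The real content therefore lies in \cref{thm:com_tim}, whose proof the paper only sketches, so I would fill in that derivation from \cref{thm:hit_tim}.

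Fix $u\neq v$; the case $u=v$ is trivial since both sides vanish. Write $\vec{\mathbf{b}} = \vec{\mathbf{1}}_u-\vec{\mathbf{1}}_v$. By \cref{thm:hit_tim},
\[
H_{uv} = \vec{\mathbf{b}}^\top \mathbf{L}^+\vec{\mathbf{D}} - \mathrm{Vol}(G)\,\vec{\mathbf{b}}^\top\mathbf{L}^+\vec{\mathbf{1}}_v.
\]
Swapping $u$ and $v$ replaces $\vec{\mathbf{b}}$ by $-\vec{\mathbf{b}}$, so
\[
H_{vu} = -\vec{\mathbf{b}}^\top \mathbf{L}^+\vec{\mathbf{D}} + \mathrm{Vol}(G)\,\vec{\mathbf{b}}^\top\mathbf{L}^+\vec{\mathbf{1}}_u.
\]
Adding the two, the degree terms cancel exactly. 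What remains is
\[
\mathrm{Vol}(G)\,\vec{\mathbf{b}}^\top \mathbf{L}^+(\vec{\mathbf{1}}_u-\vec{\mathbf{1}}_v) = \mathrm{Vol}(G)\,\vec{\mathbf{b}}^\top\mathbf{L}^+\vec{\mathbf{b}}.
\]
Expanding this quadratic form with the symmetry of $\mathbf{L}^+$ gives the entrywise form $L^+_{uu}+L^+_{vv}-2L^+_{uv}$.

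The only point needing care is that $E_{uv}$ is well defined independently of the choice of solution $\vec{\boldsymbol{\phi}}$. Any solution of $\mathbf{L}\vec{\boldsymbol{\phi}}=\vec{\mathbf{b}}$ differs from $\mathbf{L}^+\vec{\mathbf{b}}$ by a multiple of $\vec{\mathbf{1}}$, because $\ker(\mathbf{L})=\operatorname{span}(\vec{\mathbf{1}})$ for connected $G$. Such a shift does not change the voltage difference $\vec{\boldsymbol{\phi}}_u-\vec{\boldsymbol{\phi}}_v$. I do not expect any genuine obstacle: the proof is a two-line algebraic identity once \cref{thm:hit_tim} is granted. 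The cancellation of the $\vec{\mathbf{D}}$ terms is the key observation.
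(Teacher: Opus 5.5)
Your proposal is correct and follows the paper's proof, which likewise just substitutes \cref{def:eff-res} into \cref{thm:com_tim}. Your derivation of \cref{thm:com_tim} from \cref{thm:hit_tim} (the $\vec{\mathbf{D}}$ terms cancel) and your check that $E_{uv}$ does not depend on the choice of solution $\vec{\boldsymbol{\phi}}$ are both correct, and they spell out steps the paper leaves implicit.
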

\begin{proof}
The result follows immediately from \cref{thm:com_tim} and \cref{def:eff-res}.
\end{proof}

\subsection{Commute Time is a Metric}
Having reviewed the above definitions and calculations, it is important to our work below that the commute time is a metric between the vertices of the graph. That is, $C_{uv}\geq 0$, $C_{uv}=0 \iff u=v$, $C_{uv}=C_{vu}$, and $C_{uv}\leq C_{uw}+C_{wv}$, as all follow from \cref{def:hitting} and \cref{def:commute}. The first three properties also follow immediately from \cref{thm:com_tim} and the positive semidefiniteness of $\mathbf{L}$. For proofs of the triangle inequality from the properties of $\mathbf{L}$, we refer the interested reader to  \cite{klein1993resistance,Tetali_1991}.

In contrast, it is of absolutely no importance whatsoever to our work below, but nevertheless amusing  \cite{Coppersmith_Tetali_Winkler_1993} that for any $v_1,v_2,\ldots,v_p\in V$, the multipoint commute sum of the hitting times through these nodes in order then returning to $v_1$ is the same as in the reverse order. That is, even though $H_{uv}$ is not typically equal to $H_{vu}$,
\begin{equation}
    H_{v_1v_2}+H_{v_2v_3}+\cdots+H_{v_{p-1}v_p}+H_{v_pv_1} = 
    H_{v_1v_p}+H_{v_{p}v_{p-1}}+\cdots+H_{v_3v_2}+H_{v_2v_1}\,,
\end{equation}
as follows directly from \cref{thm:hit_tim}. (The trivial $p=2$ case recovers $C_{uv}=C_{vu}$.)

\section{Building a Model for Effective Resistance}
\label{sec:model}

Our goal for the remainder of this paper is to understand the interplay between the commute times between vertices of an RGG and the physical embedding of their points on the torus. A natural starting point for building such a model is to first understand the behavior of effective resistance on one of the simplest graphs in the same embedding space: a regular square lattice with axes aligned with those of $\mathbb{T}^2$. The square lattice provides a tractable setting to identify, from first principles, spatial features that should appear in a model for commute times, before we try to extend that model to the irregular geometry of an RGG. 
We will then combine these features with the degree dependence for dense graphs in von Luxburg et al.'s theorem~\cite{vonLuxburg2014} (\cref{thm:vonLuxburg}).

\subsection{Effective Resistance on a Square Lattice: Analytical Results}
\label{sec:lattice-analytical}

Effective resistances between nodes on infinite square lattices of unit resistors are well documented in the literature. Cserti \cite{cserti2000} derives exact expressions using lattice Green's functions and Jeng \cite{jeng2004} extends these results to finite toroidal and cylindrical lattices with a superposition argument. We  directly cite the necessary results by Rossi et al.~\cite{rossi2015} for effective resistances in $\mathbb{T}^2$. We build on these results by proposing the following terms for a truncated asymptotic approximation for the effective resistance $E(d, \theta)$ between two nodes separated by Euclidean distance $d>0$ at angle $\theta$ formed between an axis of the lattice (and the torus) and the line through both nodes:
\begin{equation}
\label{eq:resistance-expansion}
    E(d, \theta) \approx \beta_0 + \beta_1\ln(d) 
    + \beta_2 d^2 + \beta_3 d^4\cos(4\theta)\,,
\end{equation}
where each of the $\beta_i$ coefficients might depend on $n$. Moreover, in the $n\gg 1$ limit of large square lattices we expect $\beta_1\to 1/\pi$ and $\beta_2\to -1/2$.
Reasoning associated with each of these 4 terms is provided below.

\subsubsection{The logarithmic term} The logarithmic dependence $(1/\pi)\ln(d)$ is established rigorously by Rossi et al.~\cite{rossi2015}. Specifically, using the connection between effective resistance and the eigenvalues of the graph Laplacian, they prove the following result for the $N \times N$ toroidal square grid.

\begin{theorem}[Rossi, Frasca \& Fagnani \cite{rossi2015}] Let $T_{N^2}$ be the two-dimensional toroidal grid with side length $N$ 
and average effective resistance $E_{\mathrm{avg}}(T_{N^2})$. Then
\[
    E_{\mathrm{avg}}(T_{N^2}) \;\sim\; \frac{1}{2\pi}\ln N 
    \quad \text{as } N \to \infty.
\]
\end{theorem}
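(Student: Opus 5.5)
The plan is to express the average effective resistance spectrally and then evaluate the resulting lattice sum asymptotically. By \cref{thm:com_tim} and \cref{def:eff-res}, $E_{uv} = (\vec{\mathbf{1}}_u-\vec{\mathbf{1}}_v)^\top \mathbf{L}^+(\vec{\mathbf{1}}_u-\vec{\mathbf{1}}_v)$. Averaging over all ordered pairs $u\neq v$ of the $n=N^2$ vertices, and using $\mathbf{L}^+\vec{\mathbf{1}}=\vec{\mathbf{0}}$ so that $\sum_{u,v}L^+_{uv}=0$, gives
\begin{equation*}
E_{\mathrm{avg}}(T_{N^2}) = \frac{1}{n(n-1)}\sum_{u\neq v}E_{uv} = \frac{2n}{n(n-1)}\,\mathrm{Tr}(\mathbf{L}^+) = \frac{2}{n-1}\sum_{\lambda_j\neq 0}\frac{1}{\lambda_j}.
\end{equation*}
The toroidal grid is a Cayley graph of $\mathbb{Z}_N^2$, so the Laplacian is diagonalized by characters. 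Its eigenvalues are $\lambda_{h,k}=4-2\cos(2\pi h/N)-2\cos(2\pi k/N) = 4\sin^2(\pi h/N)+4\sin^2(\pi k/N)$ for $(h,k)\in\{0,\dots,N-1\}^2$, and the zero mode is $(h,k)=(0,0)$. The whole problem therefore reduces to the asymptotics of
\begin{equation*}
S_N \equiv \sum_{(h,k)\neq(0,0)}\frac{1}{4\sin^2(\pi h/N)+4\sin^2(\pi k/N)}.
\end{equation*}

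The key step, and the main obstacle, is showing $S_N \sim \frac{N^2}{4\pi}\ln N$. This gives $E_{\mathrm{avg}}\sim \frac{2}{N^2}\cdot\frac{N^2}{4\pi}\ln N = \frac{1}{2\pi}\ln N$.

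The divergence comes only from small frequencies, so I would split the sum. Using periodicity, re-index $h,k$ to $\{-\lfloor N/2\rfloor,\dots,\lceil N/2\rceil-1\}$ and split at a cutoff $|(h,k)|\le \epsilon N$ versus the rest.

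\emph{Far region.} On the far region, $\lambda_{h,k}$ is bounded below by $c\,\epsilon^2$. The sum over it is then a Riemann sum, $N^2\int \frac{dx\,dy}{4\sin^2(\pi x)+4\sin^2(\pi y)}$ over $[-\tfrac12,\tfrac12]^2\setminus B_\epsilon$, and this integral is $O(\ln(1/\epsilon))$.

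\emph{Near region.} On the near region, use $\sin^2 t = t^2(1+O(t^2))$. This gives $\lambda_{h,k} = \frac{4\pi^2}{N^2}(h^2+k^2)(1+O(\epsilon^2))$, so the near sum is
\begin{equation*}
\frac{N^2}{4\pi^2}\,(1+O(\epsilon^2))\sum_{0<h^2+k^2\le \epsilon^2N^2}\frac{1}{h^2+k^2}.
\end{equation*}
The standard lattice-point estimate gives $\sum_{0<h^2+k^2\le R^2}(h^2+k^2)^{-1} = 2\pi\ln R + O(1)$. This can be obtained by comparing with $\int_1^R 2\pi r\,dr/r^2$ via counting annuli, where the number of lattice points in an annulus is $\pi(r_2^2-r_1^2)+O(r_2)$, which summed dyadically contributes only $O(1)$ error. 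The near sum is then $\frac{N^2}{2\pi}\ln(\epsilon N)(1+O(\epsilon^2)) + O(N^2)$.

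\emph{Combining.} Adding the two regions,
\begin{equation*}
S_N = \frac{N^2}{2\pi}\ln N + O_\epsilon(N^2) + O(\epsilon^2 N^2\ln N).
\end{equation*}
Dividing by $\frac{N^2}{2\pi}\ln N$, letting $N\to\infty$ and then $\epsilon\to 0$, gives $S_N\sim\frac{N^2}{2\pi}\ln N$.

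This exposes a factor-of-two bookkeeping issue that I would double-check carefully. With the pair-averaging above, $E_{\mathrm{avg}}\sim\frac{2}{N^2}\cdot\frac{N^2}{2\pi}\ln N=\frac{1}{\pi}\ln N$. The stated constant $\frac{1}{2\pi}$ therefore corresponds to the normalization $E_{\mathrm{avg}} = \frac{1}{n}\mathrm{Tr}(\mathbf{L}^+)\cdot$ (i.e., $\frac{1}{n^2}\sum_{u,v}E_{uv}$ counted over unordered pairs, or equivalently $\frac{1}{n}\sum_j\lambda_j^{-1}$), as in Rossi et al. So I would adopt their definition of the average explicitly before concluding. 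The asymptotic content, that the sum is $\frac{N^2}{2\pi}\ln N$ to leading order, is the real work, and the lattice-point log estimate together with the uniform Taylor control in the near region are where care is needed.
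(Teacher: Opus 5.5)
Your argument is correct and follows the eigenvalue-based route the paper attributes to Rossi et al.\ (the paper gives no proof of its own, only the citation): you reduce the average to $\mathrm{Tr}(\mathbf{L}^+)$, diagonalize the toroidal grid by characters, and extract $S_N\sim\frac{N^2}{2\pi}\ln N$ from the small-frequency lattice sum. Your resolution of the factor of two is also right: the constant $\frac{1}{2\pi}$ corresponds to Rossi et al.'s normalization $\frac{1}{2n^2}\sum_{u,v}E_{uv}=\frac{1}{n}\mathrm{Tr}(\mathbf{L}^+)$, which matches the paper's own remark that a factor of $\frac12$ is absorbed in the averaging, so you should delete the earlier stray target $S_N\sim\frac{N^2}{4\pi}\ln N$, which contradicts your own correct computation.
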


This result relates to the fact that the effective resistance between two nodes separated by Euclidean distance $d$ on the two-dimensional toroidal grid grows $\sim\ln d$ at the level of individual pairs. The limiting behavior of the coefficient, $\beta_1\to\frac{1}{\pi}$ (rather than $\frac{1}{2\pi}$), arises because a factor of $\frac{1}{2}$ is absorbed in averaging resistance over all pairs; for a given source-sink pair at separation $d$, the source contributes $\frac{1}{2\pi}\ln d$ and the sink contributes an equal amount, giving $\frac{1}{\pi}\ln d$. The logarithmic scaling is a signature of two-dimensional diffusion, reflecting the fact that the Green's function of the Laplacian on $\mathbb{R}^2$ is logarithmic. 

At the same time, this logarithmic dependence leads to our $\beta_0$ term, since upon rescaling from the $N\times N$ toroidal square grid to the unit torus the logarithmic term becomes $\ln(d/r_n) = \ln(d) - \ln(r_n)$, the latter term being a constant with respect to $d$ and $\theta$ (but expected to depend on $n$, as we will see in \cref{sec:lattice-numerical}).

\subsubsection{The quadratic background term}
The $-\frac{1}{2}d^2$ term arises from the requirement of global charge neutrality in the absence of a boundary. Specifically, compensating with a uniformly distributed charge, the Green's function on the unit torus is the solution to
$\nabla^2 V = \delta(\mathbf{r}) - 1$.
At distances small enough to ignore the periodic boundary conditions (that is, also implicitly assuming $n\gg 1$), $\nabla^2 V = -1$ then yields the particular solution $V = -\frac{1}{4}d^2$, and accounting for both source and sink gives a net contribution of $-\frac{1}{2}d^2$ to the resistance. This is a purely finite-domain correction, absent on the infinite plane.

\subsubsection{The angular correction term} 
The logarithmic and quadratic terms both have isotropic symmetry: they depend only on the separation distance $d$. But the representation of  $\mathbb{T}^2$ as the unit square $[0,1]^2$ with periodic boundary conditions highlights the absence of this symmetry. Two nodes separated by the same distance $d$ but at different angles $\theta$ relative to the lattice axes sit in geometrically different environments: for example, traveling along an angle $\theta = 0$ (along an axis) reaches the periodic boundary more quickly than one at $\theta = \pi/4$ (along a diagonal). This anisotropy must be captured by additional angular correction.

After accounting for the source term and the topological (i.e., global charge neutrality) background, any remaining correction $V_{geom}$ must satisfy the homogeneous Laplace equation $\nabla^2 V_{geom} = 0$. The space of solutions to the homogeneous Laplace equation on a planar domain admits a natural orthogonal basis of harmonic polynomials. In polar coordinates these are $\{d^k\cos(k\theta),\, d^k\sin(k\theta)\}_{k \geq 0}$ and $\{d^{k}\cos(k\theta),\, d^{k}\sin(k\theta)\}_{k \leq -1}$, and any smooth correction away from the origin but far from the periodic boundaries can be written as a linear combination of these basis functions. We retain only the positive powers $k\geq 0$ because the negative powers diverge as $d \to 0$ whereas the effective resistance between nearby nodes should be dominated by the logarithmic term.

Any $k=0$ contribution can be absorbed into the $\beta_0$ constant. We further restrict the remaining harmonics to those that are consistent with the symmetry of the square torus.
First, the reflection symmetry requires that $E(d,\theta)$ is even in $\theta$, eliminating all $\sin(k\theta)$ terms. Second, the $90^\circ$ rotational symmetry of the square requires $E(d, \theta + \pi/2) = E(d,\theta)$, which for $\cos(k\theta)$ demands $k$ be a multiple of $4$. The only harmonics remaining can thus be written (redefining $k$) as $\{d^{4k}\cos(4k\theta)\}_{k \geq 1}$, and the leading-in-$d$ correction is the $k=1$ term
\[
    V_{geom}(d,\theta) = \beta_3 d^4\cos(4\theta)
\]
for some constant $\beta_3$ determined by the lattice geometry. This anisotropic term is largest in magnitude along the axes and diagonals of the square, where the regularity due to the anisotropy of the square geometry is most pronounced.
We ignore higher order terms $k \geq 2$ here as they vanish quickly for $d\ll 1$. 

Combining these terms yields the approximate expansion in \cref{eq:resistance-expansion}. Each nonconstant term encodes a distinct property of the graph: two-dimensional diffusion, the compact topology, and the square symmetry of the torus. This structural separation is precisely what makes this approximate expansion useful, in that the contribution of each term can be assessed independently.

\subsection{Effective Resistance on a Square Lattice: Numerical Results}
\label{sec:lattice-numerical}

To build and test our regression models for distances between vertices in RGGs, we wrote a lightweight Python library \cite{rupchin2025rgg} to facilitate the numerical study of RGGs and model fitting so that our numerical results are easily reproducible and verifiable.

Rather than computing effective resistances for all $\binom{n}{2}$ pairs of nodes, which as noted in \cref{subsec:computation} requires $O(n^3)$ operations and $O(n^2)$ memory, we estimate regression models fit to samples of pairs, with each pair requiring $O(n)$ operations. 

Because of the non-uniform distribution of pairwise distances on $\mathbb{T}^2$, uniform sampling of pairs is inappropriate. For two points drawn independently and uniformly from $\mathbb{T}^2$, the marginal density of their separation $d$ grows linearly at small $d$, peaks at an intermediate separation, and then decreases as $d$ approaches $d_{\max} = 1/\sqrt{2}$ due to the toroidal geometry. Uniform pair sampling would oversample intermediate distances and provide insufficient coverage of the short and long range cases, where the functional form of the resistance is more sensitive to the choice of features. To correct for this undersampling, we first discard all pairs with separation $d \leq r_n$, then partition the observed range of remaining separations (approximately $[r_n, d_{\max}]$) into 20 equally spaced bins and draw up to 50 pairs uniformly at random from each bin, yielding a sample of approximately 1,000 pairs with near-uniform marginal coverage over the sampled range. We chose the numbers of bins and samples to balance statistical certainty against computational cost.

\begin{table}[ht]
\centering
\begin{tabular}{lccccc}
\hline
$n$ & Intercept & $\ln(d)$ & $d^2$ & $d^4\cos(4\theta)$ & $R^2$ \\
\hline
$256$  & $1.3846$ & $0.3104$ & $-0.4672$ & $-0.2043$ & $0.99954$ \\
$1024$ & $1.6147$ & $0.3165$ & $-0.4891$ & $-0.2155$ & $0.99988$ \\
$4096$ & $1.8364$ & $0.3172$ & $-0.4919$ & $-0.2166$ & $0.99996$ \\
\hline
\vspace*{-0.15in}\\
\begin{tabular}{c}Theory\\ $n\to\infty$ \end{tabular} & \begin{tabular}{c} as $n\to 4n$, \\ increase by\\ $\frac{1}{\pi}\ln(2) \doteq 0.2206$ \end{tabular} & $\frac{1}{\pi} \doteq 0.3183$ & $-1/2$ & --- & --- \\
\vspace*{-0.12in}\\
\hline
\end{tabular}
\vspace*{0.05in}\\
\caption{Regression coefficients for effective resistance on $n$-node square lattice grids on $\mathbb{T}^2$, fitted using stratified distance-bin sampling. Theoretical values for the logarithmic and quadratic coefficients are derived in Section~\ref{sec:lattice-analytical}. The theoretical prediction for the change in the constant intercept as $n\to 4n$ follows from $r_n \to r_n/2$.}
\label{tab:square-lattice-no-perturbation}
\end{table}

We then fit \cref{eq:resistance-expansion} as a linear model to the distances between sampled pairs from square lattices on $\mathbb{T}^2$ at three graph sizes $n \in \{256, 1024, 4096\}$. The fitted coefficients and corresponding $R^2$ values are reported in Table~\ref{tab:square-lattice-no-perturbation}, with several features worth noting. The coefficient of the $\ln(d)$ term rapidly approaches $1/\pi$ as $n$ increases, recovering the theoretical prediction to within $0.4\%$ at $n = 4096$. At the same time, since $r_n \propto 1/\sqrt{n}$, the $\frac{1}{\pi}\ln(d/r_n)$ dependence should, for every increase in $n$ by a factor of $4$, increase the constant term by a value of $\frac{1}{\pi}\ln(2)$. Similarly, we observe that the coefficient of $d^2$ approaches $-1/2$, with the discrepancy shrinking from $6.6\%$ at $n = 256$ to $1.6\%$ at $n = 4096$. Meanwhile, the coefficient on the $d^4\cos(4\theta)$ term appears to stabilize near $-0.217$ for large $n$. Finally, we note that the $R^2$ values exceed $0.9995$ across all three graph sizes, confirming that the expansion presented in \cref{eq:resistance-expansion} captures almost all of the systematic variation in the effective resistance on the square lattice. We then take this model as a starting point for the RGG distance models developed below.

\section{Numerical Results for RGGs} 
\label{sec:numerics}
The following section includes descriptions of the methodology used to fit regression models for the effective resistances between nodes in RGGs, along with numerical results and additional comments.

\subsection{Model Specification}

The theoretical and numerical results of the preceding section for the square lattice suggest natural candidate terms to be included in a model for effective resistance on sparse RGGs. In the dense regime, von Luxburg et al.\ \cite{vonLuxburg2014} show that commute times degenerate to a function of local vertex degrees alone: for any fixed pair of vertices $u,v \in V_n$, as $n\to \infty$,
\[
C_{uv} \to \mathrm{Vol}(G_n)\,\Bigl(\frac{1}{D_u} + \frac{1}{D_v}\Bigr).
\]
While this approximation fails in the sparse supercritical regime and the sharply fully connected regime --- in particular, note that \cref{thm:vonLuxburg} requires even greater density than being fully connected --- the inverse degree sum $\frac{1}{D_u} + \frac{1}{D_v}$ still captures genuine local topological information about the graph and we suppose it may still contribute explanatory power even when it does not dominate. We therefore include it alongside the geometric terms from our square lattice analysis above to propose the following model for effective resistance between nodes $u$ and $v$ with (weighted) degrees $D_u$ and $D_v$, respectively, separated by Euclidean distance $d_{uv}$ in a sparse RGG:
\begin{equation}
\label{eq:rgg-model}
    E_{uv} \approx \beta_0 
    + \beta_1 \ln(d_{uv}) 
    + \beta_2 d_{uv}^2 
    + \beta_3 d_{uv}^4\cos(4\theta_{uv}) 
    + \beta_4 \left(\frac{1}{D_u} + \frac{1}{D_v}\right)\,,
\end{equation}
where, as previously defined above for the square lattice, $\theta_{uv}$ is the angle of the vector from $u$ to $v$ relative to an axis. We will estimate the coefficients $\beta_0, \dots, \beta_4$ by regression on a stratified sample of node pairs.

\subsection{Methodology}

For each combination of graph size $n$ and connection radius $r_n$, we generate an ensemble of $M$ (default $M=50$) independent RGG realizations $\{G^{(1)}, \dots, G^{(M)}\}$ on $\mathbb{T}^2$. For each realization we compute the effective resistances $E_{uv}$ for a stratified sample of node pairs in the LCC as described in 
Section~\ref{sec:lattice-numerical}, and assemble the feature vectors $\phi_{uv} = (\ln d_{uv},\, d_{uv}^2,\, d_{uv}^4\cos(4\theta_{uv}),\, \frac{1}{D_u} + \frac{1}{D_v})$ for each sampled pair $\{u,v\}$ separated by Euclidean distance $d_{uv}$ at angle $\theta_{uv}$ to the axes.

The ensemble of $M$ graphs with identical parameters but different random seeds is then partitioned into training and test sets: a random subset of $\lfloor s M \rfloor$ graphs (with $s = 0.5$ by default) contributes pairs to the training set, and the remaining graphs form the test set. Partitioning at the graph level rather than at the pair level ensures that the test set contains entirely unseen graph realizations, so that the reported test $R^2$ measures generalization across graphs rather than interpolation within a single graph. The regression model is fit to the training set and evaluated on the test set. To assess model stability, this train--test split is repeated over $S$ (default $S = 50$) independent random shuffles of the graph ensemble. Variation in the fitted coefficients and test $R^2$ across shuffles provides an empirical estimate of the sensitivity of the model to the particular graphs included in training versus testing. We report both the mean and standard deviation of each coefficient across shuffles.

This procedure is repeated across a range of graph sizes $n$ and connection radii $r_n$. We present findings for the supercritical regime with $(n-1)\pi r_n^2 > k_c$. Varying $n$ at fixed expected degree allows us to assess the finite-size behavior of the fitted coefficients and to examine whether they appear to converge to limiting values as $n \to \infty$, which would be consistent with the existence of a well-defined asymptotic approximation of the form \cref{eq:rgg-model}.

\subsection{Numerical Results for Supercritical RGGs}

\begin{figure}[ht]
    \centering
    \includegraphics[width=0.95\textwidth]{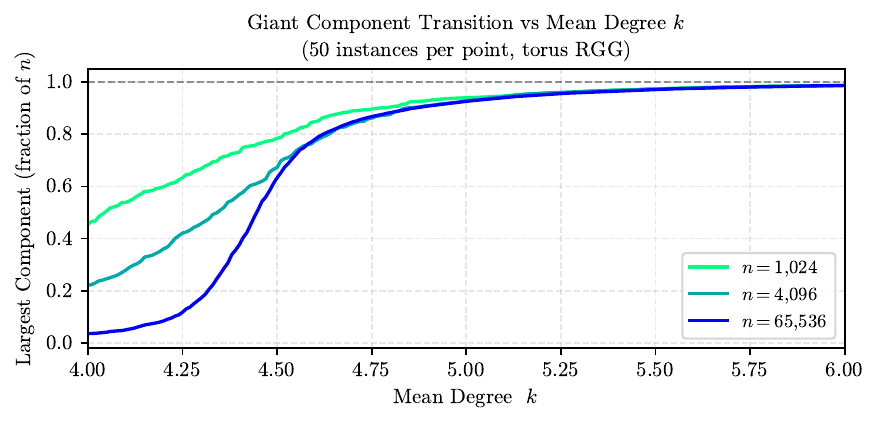}
    \caption{Largest connected component (LCC) size as a fraction of $n$, averaged over 50 independent RGG instances per $(n,k)$ pair, $n \in \{1024, 4096, 65536\}$. The transition sharpens with increasing $n$. The LCC contains $>90\%$ of all nodes for $k \ge 4.86$ across all system sizes.}
    \label{fig:lcc}
\end{figure}

\subsubsection{Giant Component Transition}

As a first validation of our implementation, we reproduce the giant component transition for toroidal RGGs. We generate the LCC size curves across mean degrees $k \in [4, 6]$ for system sizes $n \in \{1024, 4096, 65536\}$, averaging over $50$ independent instances per $(n, k)$ pair. As seen in Figure~\ref{fig:lcc}, the transition sharpens with increasing $n$, consistent with the theoretical picture of a phase transition as $n \to \infty$. Notably, for all system sizes tested, the LCC includes $>90\%$ of nodes by mean degree $k \approx 4.86$ (cf.\ the fully connected condition $k\sim\ln n$, with $\ln 2^{16}\doteq 11.1$). As such, for the graph sizes $n$ that we consider numerically, we do not expect the restriction to the LCC to be a confounding factor for any $k \geq 6$  in our regression results below. Instead, we hypothesize that most degradation in model fit we will observe at low-but-supercritical  $k$ is attributable to greater variability in graph geometry closer to $k_c$.

\subsubsection{Regression Results}

Table~\ref{tab:rgg-supercritical} reports the mean and standard 
deviation of the regression coefficients and test $R^2$ values across $50$ independent shuffles of the train--test split, for numbers of nodes $n$ between 512 and 8192 and expected mean degrees $k$ between 5 and 20. We visualize these results in Figures \ref{fig:Numerical-RGG-Supercritical-Fig-1}, \ref{fig:Numerical-RGG-Supercritical-Fig-2} and \ref{fig:Numerical-RGG-Supercritical-k=20-Coefs}.

\begin{table}[ht]
\centering
\small
\resizebox{\textwidth}{!}{
\begin{tabular}{llcccccc}
\hline
$n$ & $k$ & Test $R^2$ & $\ln(d)$ & $d^2$ & $d^4\cos(4\theta)$ & $(D_u^{-1}+D_v^{-1})$ & Intercept \\
\hline
$512$ & $5$ & $0.199 \pm 0.024$ & $2.205 \pm 0.115$ & $-2.794 \pm 0.306$ & $-1.005 \pm 0.390$ & $3.132 \pm 0.136$ & $5.526 \pm 0.323$ \\
$512$ & $6$ & $0.334 \pm 0.027$ & $0.819 \pm 0.027$ & $-1.327 \pm 0.080$ & $-0.563 \pm 0.109$ & $2.470 \pm 0.079$ & $1.915 \pm 0.070$ \\
$512$ & $8$ & $0.663 \pm 0.021$ & $0.258 \pm 0.004$ & $-0.385 \pm 0.023$ & $-0.133 \pm 0.029$ & $1.817 \pm 0.046$ & $0.545 \pm 0.015$ \\
$512$ & $10$ & $0.813 \pm 0.012$ & $0.136 \pm 0.002$ & $-0.213 \pm 0.009$ & $-0.088 \pm 0.009$ & $1.562 \pm 0.033$ & $0.270 \pm 0.008$ \\
$512$ & $12$ & $0.871 \pm 0.009$ & $0.083 \pm 0.001$ & $-0.127 \pm 0.004$ & $-0.054 \pm 0.005$ & $1.401 \pm 0.043$ & $0.160 \pm 0.007$ \\
$512$ & $16$ & $0.935 \pm 0.005$ & $0.041 \pm 0.000$ & $-0.060 \pm 0.002$ & $-0.025 \pm 0.002$ & $1.224 \pm 0.007$ & $0.079 \pm 0.001$ \\
$512$ & $20$ & $0.962 \pm 0.001$ & $0.024 \pm 0.000$ & $-0.037 \pm 0.001$ & $-0.015 \pm 0.001$ & $1.160 \pm 0.004$ & $0.046 \pm 0.001$ \\
\hline
$1024$ & $5$ & $0.207 \pm 0.024$ & $2.326 \pm 0.110$ & $-3.237 \pm 0.468$ & $-1.374 \pm 0.627$ & $3.254 \pm 0.104$ & $6.545 \pm 0.341$ \\
$1024$ & $6$ & $0.343 \pm 0.018$ & $0.820 \pm 0.019$ & $-1.363 \pm 0.088$ & $-0.615 \pm 0.108$ & $2.650 \pm 0.052$ & $2.153 \pm 0.051$ \\
$1024$ & $8$ & $0.673 \pm 0.012$ & $0.269 \pm 0.004$ & $-0.454 \pm 0.019$ & $-0.192 \pm 0.020$ & $1.826 \pm 0.026$ & $0.657 \pm 0.012$ \\
$1024$ & $10$ & $0.786 \pm 0.017$ & $0.136 \pm 0.002$ & $-0.226 \pm 0.007$ & $-0.108 \pm 0.007$ & $1.544 \pm 0.031$ & $0.321 \pm 0.008$ \\
$1024$ & $12$ & $0.853 \pm 0.026$ & $0.085 \pm 0.001$ & $-0.142 \pm 0.005$ & $-0.066 \pm 0.007$ & $1.423 \pm 0.051$ & $0.190 \pm 0.009$ \\
$1024$ & $16$ & $0.933 \pm 0.002$ & $0.042 \pm 0.000$ & $-0.068 \pm 0.002$ & $-0.033 \pm 0.002$ & $1.241 \pm 0.009$ & $0.094 \pm 0.001$ \\
$1024$ & $20$ & $0.958 \pm 0.001$ & $0.025 \pm 0.000$ & $-0.039 \pm 0.001$ & $-0.017 \pm 0.001$ & $1.178 \pm 0.005$ & $0.053 \pm 0.001$ \\
\hline
$2048$ & $5$ & $0.232 \pm 0.010$ & $2.653 \pm 0.069$ & $-3.715 \pm 0.331$ & $-1.409 \pm 0.504$ & $3.478 \pm 0.077$ & $8.144 \pm 0.214$ \\
$2048$ & $6$ & $0.359 \pm 0.038$ & $0.878 \pm 0.024$ & $-1.610 \pm 0.108$ & $-0.808 \pm 0.149$ & $2.595 \pm 0.042$ & $2.598 \pm 0.067$ \\
$2048$ & $8$ & $0.673 \pm 0.011$ & $0.261 \pm 0.004$ & $-0.396 \pm 0.021$ & $-0.135 \pm 0.024$ & $1.892 \pm 0.023$ & $0.717 \pm 0.010$ \\
$2048$ & $10$ & $0.806 \pm 0.010$ & $0.136 \pm 0.001$ & $-0.217 \pm 0.007$ & $-0.098 \pm 0.008$ & $1.552 \pm 0.018$ & $0.368 \pm 0.004$ \\
$2048$ & $12$ & $0.877 \pm 0.004$ & $0.084 \pm 0.001$ & $-0.139 \pm 0.004$ & $-0.067 \pm 0.004$ & $1.410 \pm 0.016$ & $0.221 \pm 0.003$ \\
$2048$ & $16$ & $0.938 \pm 0.002$ & $0.042 \pm 0.000$ & $-0.067 \pm 0.001$ & $-0.031 \pm 0.001$ & $1.243 \pm 0.005$ & $0.108 \pm 0.001$ \\
$2048$ & $20$ & $0.961 \pm 0.001$ & $0.025 \pm 0.000$ & $-0.039 \pm 0.001$ & $-0.017 \pm 0.001$ & $1.178 \pm 0.004$ & $0.062 \pm 0.000$ \\
\hline
$4096$ & $5$ & $0.198 \pm 0.021$ & $3.151 \pm 0.095$ & $-3.314 \pm 0.430$ & $-0.651 \pm 0.515$ & $3.627 \pm 0.105$ & $10.774 \pm 0.327$ \\
$4096$ & $6$ & $0.372 \pm 0.010$ & $0.869 \pm 0.012$ & $-1.481 \pm 0.066$ & $-0.753 \pm 0.102$ & $2.588 \pm 0.037$ & $2.901 \pm 0.036$ \\
$4096$ & $8$ & $0.647 \pm 0.015$ & $0.273 \pm 0.003$ & $-0.494 \pm 0.021$ & $-0.291 \pm 0.028$ & $1.919 \pm 0.023$ & $0.834 \pm 0.010$ \\
$4096$ & $10$ & $0.796 \pm 0.015$ & $0.137 \pm 0.001$ & $-0.232 \pm 0.009$ & $-0.112 \pm 0.012$ & $1.566 \pm 0.012$ & $0.416 \pm 0.004$ \\
$4096$ & $12$ & $0.853 \pm 0.030$ & $0.084 \pm 0.000$ & $-0.136 \pm 0.002$ & $-0.066 \pm 0.003$ & $1.408 \pm 0.018$ & $0.251 \pm 0.003$ \\
$4096$ & $16$ & $0.936 \pm 0.002$ & $0.041 \pm 0.000$ & $-0.066 \pm 0.001$ & $-0.031 \pm 0.002$ & $1.257 \pm 0.006$ & $0.120 \pm 0.001$ \\
$4096$ & $20$ & $0.958 \pm 0.001$ & $0.025 \pm 0.000$ & $-0.039 \pm 0.001$ & $-0.018 \pm 0.001$ & $1.188 \pm 0.004$ & $0.070 \pm 0.000$ \\
\hline
$8192$ & $5$ & $0.207 \pm 0.016$ & $3.180 \pm 0.087$ & $-3.591 \pm 0.485$ & $-0.072 \pm 0.589$ & $3.775 \pm 0.114$ & $11.883 \pm 0.298$ \\
$8192$ & $6$ & $0.358 \pm 0.011$ & $0.861 \pm 0.012$ & $-1.569 \pm 0.094$ & $-0.781 \pm 0.139$ & $2.659 \pm 0.043$ & $3.178 \pm 0.042$ \\
$8192$ & $8$ & $0.660 \pm 0.010$ & $0.263 \pm 0.003$ & $-0.430 \pm 0.022$ & $-0.193 \pm 0.030$ & $1.896 \pm 0.020$ & $0.911 \pm 0.009$ \\
$8192$ & $10$ & $0.810 \pm 0.010$ & $0.136 \pm 0.001$ & $-0.220 \pm 0.006$ & $-0.098 \pm 0.008$ & $1.557 \pm 0.012$ & $0.461 \pm 0.004$ \\
$8192$ & $12$ & $0.884 \pm 0.002$ & $0.084 \pm 0.000$ & $-0.136 \pm 0.003$ & $-0.062 \pm 0.005$ & $1.404 \pm 0.010$ & $0.280 \pm 0.002$ \\
$8192$ & $16$ & $0.939 \pm 0.001$ & $0.042 \pm 0.000$ & $-0.068 \pm 0.001$ & $-0.032 \pm 0.001$ & $1.256 \pm 0.005$ & $0.135 \pm 0.001$ \\
$8192$ & $20$ & $0.960 \pm 0.001$ & $0.025 \pm 0.000$ & $-0.041 \pm 0.001$ & $-0.018 \pm 0.001$ & $1.189 \pm 0.003$ & $0.079 \pm 0.000$ \\
\hline
\end{tabular}
}
\caption{Means and standard deviations of regression coefficients and test $R^2$ values across 50 independent train--test shuffles for each of the $(n,k)$ pairs tested here.}
\label{tab:rgg-supercritical}
\end{table}

\begin{figure}[ht]
    \centering
    \includegraphics[width=0.95\textwidth]{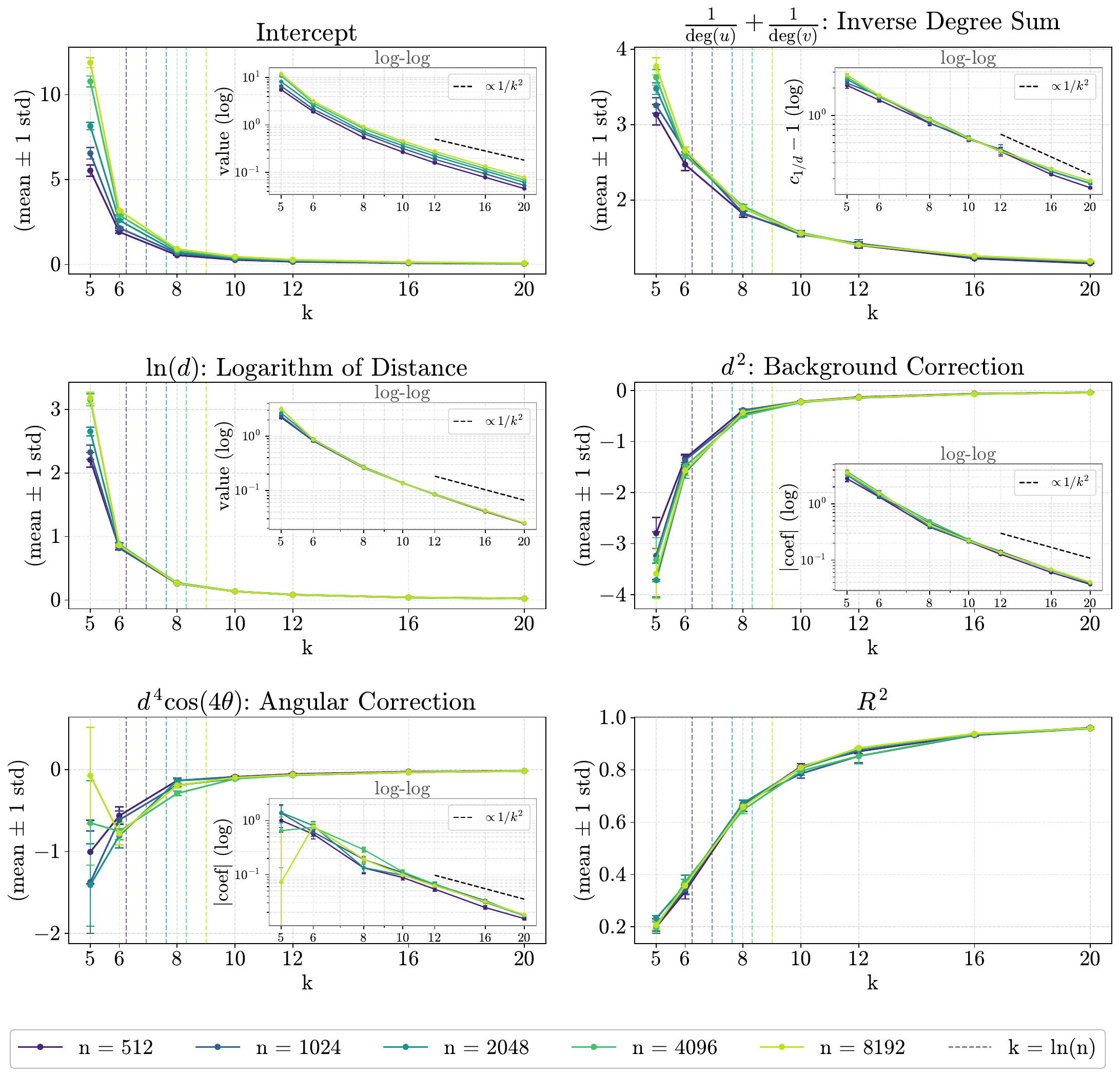}
    \caption{Visualization of Table~\ref{tab:rgg-supercritical} data plotting regression model fit and coefficients with respect to expected mean degree $k$ for various sizes $n$. Dashed vertical lines indicate the $k = \ln(n)$ fully-connected crossover. Insets: log-log plots with $1/k^2$ reference lines.}
    \label{fig:Numerical-RGG-Supercritical-Fig-1}
\end{figure}

\begin{figure}[ht]
    \centering
    \includegraphics[width=0.95\textwidth]{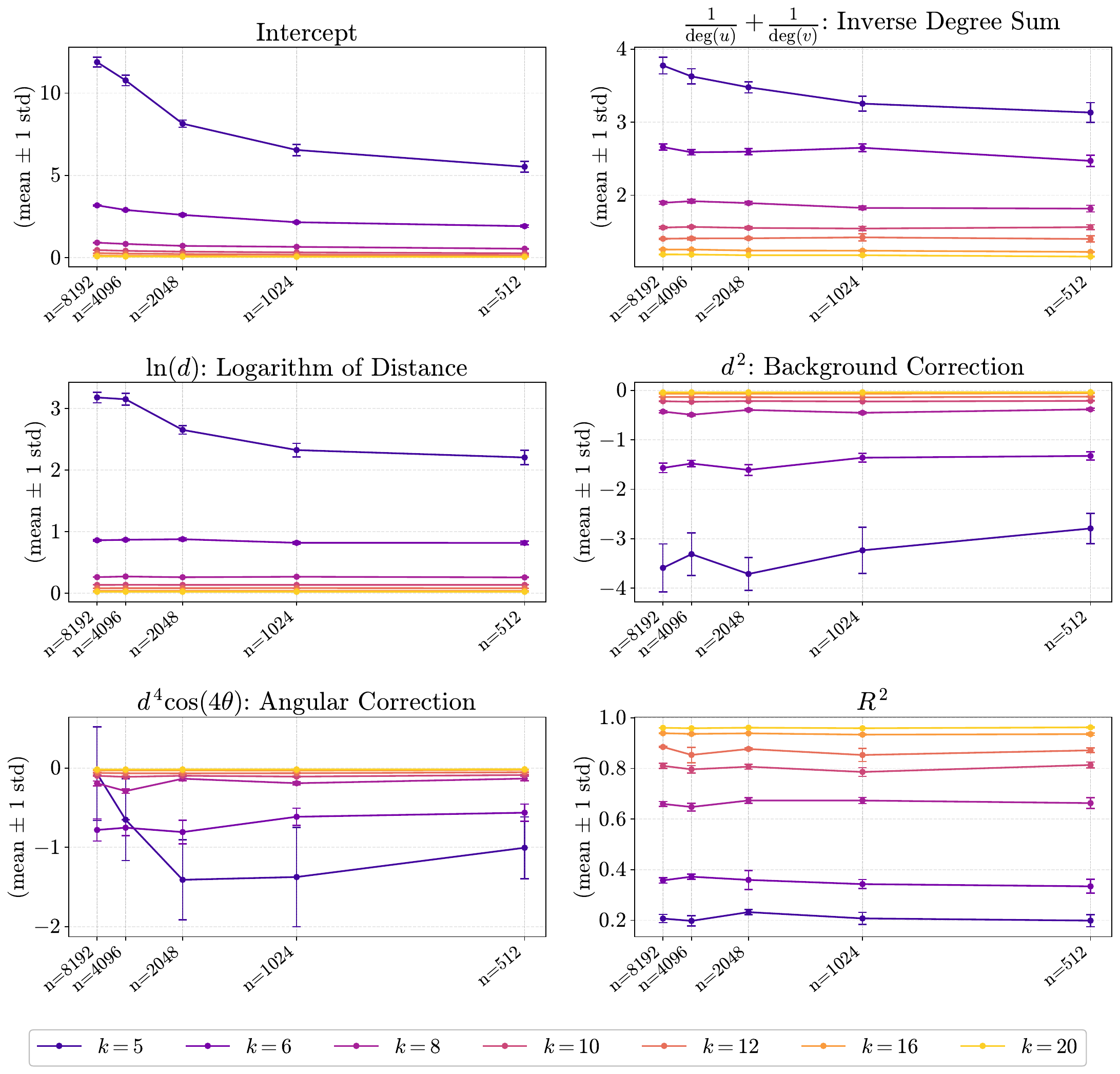}
    \caption{Visualization of Table~\ref{tab:rgg-supercritical} data plotting regression model fit and coefficients with respect to $1/n$ for various mean degrees $k$.}
    \label{fig:Numerical-RGG-Supercritical-Fig-2}
\end{figure}

\emph{Model fit and stability improves with mean degree rather than size.} The test $R^2$ values are poor near the percolation threshold (at $k = 5$, $R^2 \approx 0.20$) but improve substantially as the mean degree increases, reaching $R^2\approx 0.8$ at $k=10$ and $R^2 \approx 0.96$ at $k = 20$ across all system sizes. Importantly, this improvement appears to be driven by increasing $k$ rather than $n$: for fixed $k$, the $R^2$ is nearly constant across $n \in \{512, \dots, 8192\}$. This suggests that the poor fit at low $k$ likely reflects the highly irregular tree-like structure of the graph near the percolation threshold, rather than finite-size effects. The LCC size data in Figure~\ref{fig:lcc} rules out the alternative explanation that the poor fit at $k = 5$ is due to the LCC being small: by $k = 6$ the LCC already contains the vast majority of nodes and the test $R^2$ for $k=6$ across system sizes hovers between $0.33$ and $0.37$ in our regressions. 

\begin{figure}[ht]
    \centering
    \includegraphics[width=0.95\textwidth]{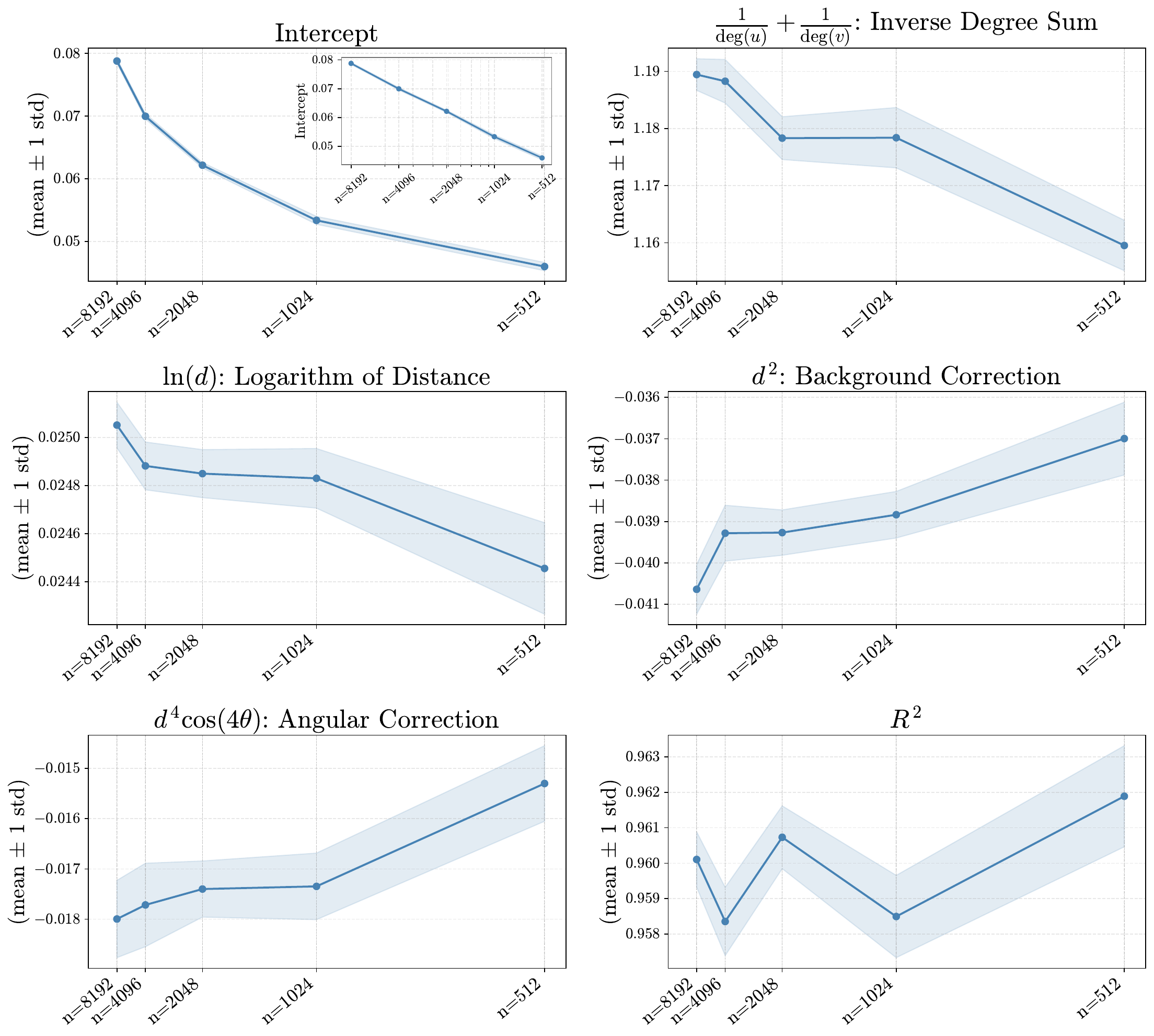}
    \caption{Visualization of Table~\ref{tab:rgg-supercritical} data plotting regression model fit and coefficients with respect to $1/n$ for mean degree $k=20$, emphasizing the increase in magnitude of the geometric coefficients with increasing $n$. Intercept inset confirms the expected linear dependence on $\ln(n)$.}
    \label{fig:Numerical-RGG-Supercritical-k=20-Coefs}
\end{figure}

\emph{Geometric terms remain significant even as their coefficients shrink.} As $k$ increases, the coefficients of the geometric terms $\ln(d)$, $d^2$, and $d^4\cos(4\theta)$ tend toward zero, linking to von Luxburg et al.'s degeneracy result: in the dense limit, the inverse degree sum dominates and the geometric structure of the graph becomes irrelevant. However, the absolute magnitude of these coefficients does not tell the whole story. As one example to assess their practical contribution to model fit at moderate $k$, we compare two models at $n = 1024$, $k = 20$: one using only the inverse degree sum $(D_u^{-1} + D_v^{-1})$, and one using all proposed features. The inverse-degree-only model achieves $R^2 = 0.7961 \pm 0.0044$, while adding the geometric terms (as in \cref{tab:rgg-supercritical}) raises this to $R^2 = 0.9585 \pm 0.0012$, a substantial improvement despite the geometric coefficients being small ($\beta_1 \doteq 0.025$, $\beta_2\doteq -0.039$, $\beta_3 \doteq -0.017$). That is, even when the geometric terms are small, their collective contribution to explaining variance in the effective resistance remains significant; the inverse degree sum alone is insufficient to fully characterize commute times even at $k$ this large. 

\emph{Coefficients are stable across shuffles.} The standard deviations of all coefficients across the $S$ train--test shuffles are small for $k \geq 8$, indicating that the model is stable with respect to individual graphs included in the training set. At $k = 5$, the geometric coefficients exhibit much larger relative variance, again likely reflecting high variability of individual graph realizations closer to the percolation threshold.

\section{The Transition from Square Lattices to RGGs} 
\label{sec:perturb}
To further assess the validity of the geometric terms in our approximation, we seek to study the transition from a fully regular geometry to the random node placement of an RGG. The (nearest-neighbor) square lattice studied in \cref{sec:model}, however, has regular degree $k=4$, below the percolation threshold; as such, perturbation of the nodes of this lattice will eventually break up the graph into small disconnected components. Meanwhile, the regression results of the preceding section show that our model achieves very good fit on RGGs with $k = 20$ across all system sizes tested. We therefore fix $k = 20$ throughout this section and connect nodes placed on a square lattice by using a fourth-order lattice in which each node is connected to all neighbors within a distance of $\sqrt{5}$ lattice spacings, yielding exactly $20$ neighbors per node.

\subsection{Perturbation Methodology} To study the transition from the regular lattice to a fully random geometric graph, we continuously deform the lattice by displacing each node by a random offset drawn independently from a bivariate normal distribution $\mathcal{N}(0, \varepsilon^2 I_2)$, where $\varepsilon$ is the perturbation scale and $I_2$ is the $2\times 2$ identity matrix. At $\varepsilon = 0$ the nodes are on a perfect square lattice (connected to their $20$ nearest neighbors). As $\varepsilon$ increases, the node positions become increasingly disordered, interpolating between the regular lattice and the uniform random placement of a standard RGG. We test perturbation values $\varepsilon \in \{0, 10^{-4}, 10^{-3}, 10^{-2}, 10^{-1}, 1, 10\}$, together with a fully random RGG at the same expected degree as a reference point. Recall the maximum distance on the unit torus is $d_{\max} = 1/\sqrt{2}\doteq 0.707$.

We set the connection radius $r$ so that the mean degree of the RGG satisfies $k=(n-1)\pi r^2\approx n\pi r^2=20$, that is, $r = 2\sqrt{5/n\pi}$. Importantly, recognizing $\sqrt{1/n}$ to be the spacing of the square lattice, this connection radius yields the same $k=20$ on that lattice; that is, the 4th nearest neighbors at $\sqrt{5}$ lattice spacings are within this connection radius but the 5th nearest neighbors at $2\sqrt{2}$ lattice spacings are not. As we will see in \cref{fig:lattice-perturbation}, the mean degrees of perturbed lattices with this constant connection radius differ only slightly and with small variance.

\subsection{Numerical Results}

\cref{fig:lattice-perturbation} shows the changes in the regression coefficients and model fit as the perturbation  increases from $\varepsilon = 0$ (perfect lattice) to $\varepsilon = 10$ (heavily disordered), with the standard RGG ($\varepsilon\to\infty$) as a reference.

\begin{figure}[ht]
    \centering
    \includegraphics[width=0.95\textwidth]{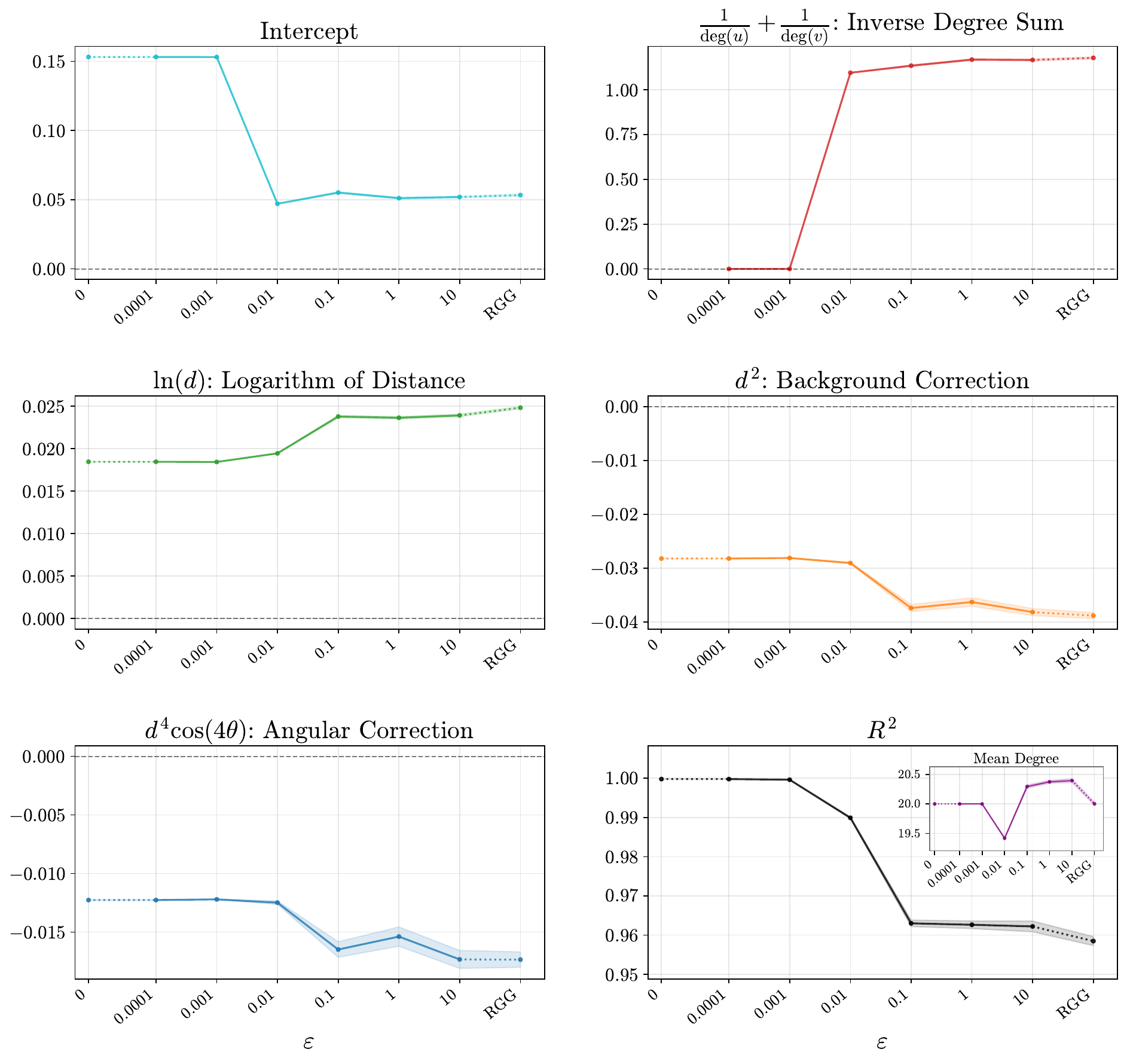}
    \caption{Evolution of regression coefficients, test $R^2$ fit, and mean degree versus increasing lattice perturbation $\varepsilon$ for $n = 1024$ nodes at connection radius $r = 2\sqrt{5/n\pi}$ (i.e., $k \approx 20$). Each point is the mean over 50 independent train--test shuffles; shaded bands show $\pm 1$ standard deviation. The rightmost points labeled ``RGG'' are random geometric graphs at $k=20$, included for reference.}
    \label{fig:lattice-perturbation}
\end{figure}

Note that the inverse degree sum coefficient obtained by regression is zero by construction at $\varepsilon = 0$, since all nodes have identical degree on the perfect lattice so this feature carries no information. Degree heterogeneity emerges once a large enough perturbation is introduced ($\varepsilon \geq 10^{-2}$), at which point the inverse degree sum immediately becomes the largest coefficient, rising sharply to approximately $1.1$ by $\varepsilon = 10^{-2}$ and stabilizing to $\approx 1.18$ thereafter. This rapid onset confirms the importance of degree heterogeneity for effective resistance, even for small geometric disorder. We additionally note that this sharp appearance of the nonzero inverse degree sum coefficient is paired with a corresponding decrease in the model intercept from 0.15 to 0.05, a change in accordance with the $k=20$ homogeneous (unperturbed) inverse degree sum ($\frac{1}{20}+\frac{1}{20}=0.1$).

The geometric coefficients $\ln(d)$, $d^2$, and $d^4\cos(4\theta)$ behave relatively similarly across the transition from unperturbed lattice to RGG. The logarithmic coefficient increases gradually from the lattice value before stabilizing near the RGG value, with an overall increase of $34.5\%$. The quadratic coefficient $d^2$ and the angular coefficient $d^4\cos(4\theta)$ likewise increase in magnitude overall as $\varepsilon$ increases, by approximately $37.6\%$ and $41.4\%$ respectively.

Meanwhile, the $R^2$ values inform us that the model fit degrades slightly with increasing perturbation but nevertheless fits well throughout the transition. The model fits the unperturbed lattice very well and remains above $0.9996$ for small perturbations (e.g. $\varepsilon \leq 10^{-3}$) where the degrees remain regular. The $R^2$ values drop through $\varepsilon = 10^{-2}$ and $10^{-1}$ as the geometric disorder becomes large enough that the lattice-derived features no longer fully capture the resistances, before stabilizing near the RGG value of $R^2 \approx 0.96$. The mean degree for this fixed connection radius remains relatively stable, with the largest exception being the drop observed at $\varepsilon = 10^{-2}$ before recovering to slightly larger than $20$ for larger perturbations.

Taken together, these results suggest that the functional form of our model is well-motivated across the full transition from regular lattice to random geometric graph: the same terms --- $\ln(d)$, $d^2$, $d^4\cos(4\theta)$, the inverse degree sum, and an intercept --- provide good predictive power throughout, with the relative importance of each term shifting smoothly as the geometric disorder increases.

Notably, whereas one might expect the effect of the geometric features to weaken as the regular structure of the lattice is destroyed, with the inverse degree sum progressively absorbing a greater role in determining the distances, we instead find that the coefficients of the geometric features increase in magnitude as the perturbation of the lattice increases. (We do also note that the coefficient of the inverse degree sum increases slightly with increasing perturbation.) We hypothesize that this behavior may be a consequence of the mean degree $k = 20$ used in these experiments and its limitation of a fit with $R^2=0.96$ for the RGG. That is, the slight decrease in $R^2$ observed across the transition suggests that the model is under some strain and that the geometric coefficients are compensating for variance that might be more stable in a denser graph or may benefit from higher-order terms. Finite-size effects at $n = 1024$ may also play a role. The dependence on mean degree, inclusion of additional terms, and finite-size effects are each interesting directions for future work.

\subsection{Remarks on the Lattice Perturbation Experiment} 
The experiments in this section establish three main findings. First, the four-feature model --- $\ln(d)$, $d^2$, $d^4\cos(4\theta)$, and the inverse degree sum (plus an intercept) --- provides good predictive power across the full range of graph structures from the regular square lattice to the fully random RGG, for graphs with expected mean degree 20, with $R^2$ remaining above $0.96$ throughout. Second, the coefficients vary only slightly throughout the perturbation, with the expected exception of the sudden appearance of the inverse degree sum coefficient (and corresponding change in the intercept) once the perturbation becomes large enough to generate degree heterogeneity, demonstrating that even a small amount of degree heterogeneity is sufficient to make local degree information the primary determinant of effective resistance. Third, the geometric terms $\ln(d)$, $d^2$, and $d^4\cos(4\theta)$ remain present and statistically significant throughout the transition, including in the fully random RGG regime, confirming that the global geometry of the graph, as inherited from the spatial domain, retains explanatory power beyond what local degree information alone can provide. Together, these findings validate the functional form of our model and motivate its application to sparse RGGs in the supercritical regime. In cases where degree heterogeneity is more extreme, the inverse degree sum is expected to play an even more prominent role, and the geometric terms may behave quite differently from what is observed  numerically here.

\section{Conclusion}
\label{sec:conclusions}

In this study we investigated the relationship between commute times and the underlying Euclidean metric for sparse random geometric graphs in regimes where the existing analytical theory is incomplete. Our central contribution is a four-feature regression model for commute times in RGGs on the torus, combining geometric terms --- $\ln(d)$, $d^2$, and $d^4\cos(4\theta)$ --- with the inverse degree sum that dominates in the limit of sufficiently dense RGGs described by von Luxburg et al.'s \cite{vonLuxburg2014} theorem. We validated our model numerically across a range of graph sizes and mean degrees, demonstrated its stability under train--test shuffles, and traced its continuous evolution from a regular square lattice to fully random RGG through a controlled perturbation experiment. Importantly, these results indicate that the two-dimensional geometry of the embedding space continues to play an important role in setting commute times even at moderate mean degrees. We performed all numerical calculations with our lightweight Python library \cite{rupchin2025rgg}, which we make available to allow our results to be easily reproduced, verified, and extended.

Our study has focused on the case with constant expected mean degree in the supercritical regime. The sharply fully connected regime --- where mean degree scales with $\ln n$, that is,
$(n-1)\pi r_n^2 \sim \ln n$, at which the graphs are just barely fully connected --- is equally important from both a theoretical and practical standpoint, and is not dense enough in the limit to satisfy the conditions of von Luxburg et al.'s theorem, but is significantly more challenging to study numerically. We hope that the model developed here will serve as a starting point for future investigation of this regime.

We also hope that the empirical relationships uncovered here will serve as a guide for future rigorous analytical work. We anticipate that attempts to obtain sharp asymptotic results for the supercritical regime would include the geometric structural features that appear in our numerical results. Our findings suggest that the coefficients of the geometric terms converge to well-defined limiting values as $n$ tends to infinity at fixed mean degrees, which would be consistent with the existence of a sharp asymptotic formula analogous to the dense-regime von Luxburg et al.\ result. We emphasize that, while the $\ln(d)$ and $d^2$ terms are isotropic, the persistent appearance of the $d^4\cos(4\theta)$ term across our results signals a dependence on the geometry of the underlying torus within which we performed all of our numerical experiments. The higher-order nature of this square-symmetric term conveniently decreases its impact for $d\ll 1$ --- that is, at distances short enough to be ``far'' from the boundary condition effects of the toroidal geometry. We expect analogous factors could be important for trying to understand the possible geometric effects in other embedding spaces. 

There are also several natural directions for future study in terms of possible applications, where the regression model developed here might serve as a basis for practical heuristics for approximating commute times and effective resistances in large spatially-embedded networks, avoiding the computational cost of full Laplacian inversion. For example, such heuristics might be useful in network routing, community detection, and distributed estimation problems \cite{ghosh2008minimizing, yen2007graph}, where exact computation is infeasible at scale but a fast geometric approximation might be sufficient. The class of graphs studied could also be extended to more involved network models with spatial structure, such as soft RGGs \cite{penrose2016connectivity,zhukovskii2010rdg}, or RGGs on the (not periodic) unit square where boundary effects will introduce additional complexity in part because vertices near the boundary have systematically lower degree. In general, this study contributes to the larger program of understanding when and how the geometry of a spatial embedding relates to the dynamics of processes on networks; as such, we are eager to see similar relationships identified in other related network settings.

\appendix

\bibliographystyle{siamplain}
\bibliography{references}
\end{document}

%% file: ex_shared.tex
\usepackage{lipsum}
\usepackage{amsfonts}
\usepackage{graphicx}
\usepackage{epstopdf}
\usepackage{algorithmic}
\ifpdf
  \DeclareGraphicsExtensions{.eps,.pdf,.png,.jpg}
\else
  \DeclareGraphicsExtensions{.eps}
\fi

\newsiamremark{remark}{Remark}
\newsiamremark{hypothesis}{Hypothesis}
\crefname{hypothesis}{Hypothesis}{Hypotheses}
\newsiamthm{claim}{Claim}
\newsiamremark{fact}{Fact}
\crefname{fact}{Fact}{Facts}

\headers{Commute Times in Sparse Random Geometric Graphs}{Filip O. Rupchin \& Peter J. Mucha}

\title{Effective Resistances and Commute Times\\ in Sparse Random Geometric Graphs\thanks{Submitted to the editors June 12, 2026. The authors acknowledge the use of Anthropic's Claude and Google's Gemini to help draft the literature review, format citations, and write parts of the codes for our numerical experiments and figures. The authors assume responsibility for all content.
\funding{This work was funded by the URAD program at Dartmouth College and the Jack Byrne Academic Cluster in Mathematics and Decision Science at Dartmouth College.}}}

\author{
Filip O. Rupchin\thanks{Dartmouth College, Hanover, NH 
  (\email{frupchin@gmail.com}).}
  \and
Peter J. Mucha\thanks{Department of Mathematics, Dartmouth College, Hanover, NH 
  (\email{peter.j.mucha@dartmouth.edu}).}
}

\usepackage{amsopn}


%% file: references.bib
@article{Tetali_1991, title={Random walks and the effective resistance of networks}, volume={4}, ISSN={0894-9840, 1572-9230}, DOI={10.1007/BF01046996}, abstractNote={In this article we present an interpretation ofeffective resistance in electrical networks in terms of random walks on underlying graphs. Using this characterization we provide simple and elegant proofs for some known results in random walks and electrical networks. We also interpret the Reciprocity theorem of electrical networks in terms of traversals in random walks. The byproducts are (a) precise version of thetriangle inequality for effective resistances, and (b) an exact formula for the expectedone-way transit time between vertices.}, number={1}, journal={Journal of Theoretical Probability}, author={Tetali, Prasad}, year={1991}, month=jan, pages={101–109}, language={en} }

@article{Coppersmith_Tetali_Winkler_1993, title={Collisions Among Random Walks on a Graph}, volume={6}, ISSN={0895-4801}, number={3}, journal={SIAM Journal on Discrete Mathematics}, publisher={Society for Industrial and Applied Mathematics}, author={Coppersmith, Don and Tetali, Prasad and Winkler, Peter}, year={1993}, month=aug, pages={363–374} }

@article{zhukovskii2010rdg,
  author  = {Zhukovskii, Maksim E.},
  title   = {The Weak Zero-One Laws for the Random Distance Graphs},
  journal = {Doklady Mathematics},
  volume  = {84},
  number  = {1},
  pages   = {51--54},
  year    = {2010}
}

@misc{sidford2018lecture13,
  author       = {Sidford, Aaron},
  title        = {Lecture 13 - {R}andom {W}alks},
  howpublished = {CME 305: Discrete Mathematics and Algorithms, Stanford University},
  year         = {2018},
  url          = {https://web.stanford.edu/class/cme305/Files/l13.pdf}
}

@book{chung1997spectral,
  author    = {Chung, Fan R. K.},
  title     = {Spectral Graph Theory},
  publisher = {American Mathematical Society},
  year      = {1997},
  series    = {CBMS Regional Conference Series in Mathematics},
  volume    = {92}
}

@article{klein1993resistance,
  author  = {Klein, Douglas J. and Randi{\'c}, Milan},
  title   = {Resistance Distance},
  journal = {Journal of Mathematical Chemistry},
  volume  = {12},
  pages   = {81--95},
  year    = {1993}
}

@misc{cnbc2025instagram,
  author       = {{CNBC}},
  title        = {Instagram Now Has 3 Billion Monthly Active Users},
  howpublished = {\url{https://www.cnbc.com/2025/09/24/instagram-now-has-3-billion-monthly-active-users.html}},
  year         = {2025},
  month        = sep,
  day          = {24},
  note         = {Accessed: 2026-03-17}
}

@article{chandra1996electrical,
  author  = {Chandra, Ashok K. and Raghavan, Prabhakar and Ruzzo, Walter L. 
             and Smolensky, Roman and Tiwari, Prasoon},
  title   = {The Electrical Resistance of a Graph Powers and Superpowers},
  journal = {SIAM Journal on Computing},
  volume  = {25},
  number  = {1},
  pages   = {235--246},
  year    = {1996}
}

@book{doyle1984random,
  author    = {Doyle, Peter G. and Snell, J. Laurie},
  title     = {Random Walks and Electric Networks},
  publisher = {Mathematical Association of America},
  year      = {1984},
  note      = {Available at \url{https://arxiv.org/abs/math/0001057}}
}

@misc{spielman2019spectral,
  author       = {Spielman, Daniel A.},
  title        = {Spectral Graph Theory},
  howpublished = {Lecture notes, Yale University},
  year         = {2019},
  url          = {http://www.cs.yale.edu/homes/spielman/sgt/}
}

@inproceedings{yen2007graph,
  author    = {Yen, Luh and Fouss, Fran{\c{c}}ois and Decaestecker, Christine and Francq, Pascal and Saerens, Marco},
  title     = {Graph Nodes Clustering Based on the Commute-Time Kernel},
  booktitle = {Advances in Knowledge Discovery and Data Mining (PAKDD)},
  series    = {Lecture Notes in Computer Science},
  volume    = {4426},
  pages     = {1037--1045},
  year      = {2007},
  publisher = {Springer}
}

@inproceedings{spielman2004nearly,
  author    = {Spielman, Daniel A. and Teng, Shang-Hua},
  title     = {Nearly-Linear Time Algorithms for Graph Partitioning, Graph Sparsification, and Solving Linear Systems},
  booktitle = {Proceedings of the 36th Annual ACM Symposium on Theory of Computing (STOC)},
  pages     = {81--90},
  year      = {2004},
  publisher = {ACM}
}

@inproceedings{fouss2004novel,
  author    = {Fouss, Fran{\c{c}}ois and Yen, Luh and Pirotte, Alain and Saerens, Marco},
  title     = {An Experimental Investigation of Graph Kernels on a Collaborative Recommendation Task},
  booktitle = {Proceedings of the 6th International Conference on Data Mining (ICDM)},
  year      = {2006},
  publisher = {IEEE}
}

@article{fouss2007random,
  author  = {Fouss, Fran{\c{c}}ois and Pirotte, Alain and Renders, Jean-Michel and Saerens, Marco},
  title   = {Random-Walk Computation of Similarities Between Nodes of a Graph with Application to Collaborative Recommendation},
  journal = {IEEE Transactions on Knowledge and Data Engineering},
  volume  = {19},
  number  = {3},
  pages   = {355--369},
  year    = {2007}
}

@article{ghosh2008minimizing,
  author  = {Ghosh, Arpita and Boyd, Stephen and Saberi, Amin},
  title   = {Minimizing Effective Resistance of a Graph},
  journal = {SIAM Review},
  volume  = {50},
  number  = {1},
  pages   = {37--66},
  year    = {2008}
}

@article{coifman2006diffusion,
  author  = {Coifman, Ronald R. and Lafon, St{\'e}phane},
  title   = {Diffusion Maps},
  journal = {Applied and Computational Harmonic Analysis},
  volume  = {21},
  number  = {1},
  pages   = {5--30},
  year    = {2006}
}

@book{penrose2003random,
  author    = {Penrose, Mathew},
  title     = {Random Geometric Graphs},
  publisher = {Oxford University Press},
  year      = {2003}
}

@article{erdos1959random,
  author  = {Erd{\H{o}}s, Paul and R{\'e}nyi, Alfr{\'e}d},
  title   = {On Random Graphs},
  journal = {Publicationes Mathematicae Debrecen},
  volume  = {6},
  pages   = {290--297},
  year    = {1959}
}

@book{bollobas2001random,
  author    = {Bollob{\'a}s, B{\'e}la},
  title     = {Random Graphs},
  edition   = {second},
  publisher = {Cambridge University Press},
  year      = {2001}
}

@article{barabasi1999emergence,
  author  = {Barab{\'a}si, Albert-L{\'a}szl{\'o} and Albert, R{\'e}ka},
  title   = {Emergence of Scaling in Random Networks},
  journal = {Science},
  volume  = {286},
  number  = {5439},
  pages   = {509--512},
  year    = {1999}
}

@article{holland1983stochastic,
  author  = {Holland, Paul W. and Laskey, Kathryn Blackmond and Leinhardt, Samuel},
  title   = {Stochastic Blockmodels: First Steps},
  journal = {Social Networks},
  volume  = {5},
  number  = {2},
  pages   = {109--137},
  year    = {1983}
}

@article{gilbert1961random,
  author  = {Gilbert, Edgar N.},
  title   = {Random Plane Networks},
  journal = {Journal of the Society for Industrial and Applied Mathematics},
  volume  = {9},
  number  = {4},
  pages   = {533--543},
  year    = {1961}
}

@article{penrose2016connectivity,
  author  = {Penrose, Mathew D.},
  title   = {Connectivity of Soft Random Geometric Graphs},
  journal = {Annals of Applied Probability},
  volume  = {26},
  number  = {2},
  pages   = {986--1028},
  year    = {2016}
}

@misc{penrose2022largec,
  author       = {Penrose, Mathew D.},
  title        = {Large Components of Random Geometric Graphs},
  howpublished = {Lecture notes, Mini-course: Munich ``Summer'' School 2022 on Discrete Random Systems},
  year         = {2022},
  url          = {https://www.mathematik.uni-muenchen.de/~heyden/Notes.Penrose.pdf}
}

@article{Quintanilla2000,
  author  = {Quintanilla, John and Torquato, Salvatore and Ziff, Robert M.},
  title   = {Efficient measurement of the percolation threshold for fully penetrable discs},
  journal = {Journal of Physics A: Mathematical and General},
  volume  = {33},
  number  = {42},
  pages   = {L399--L407},
  year    = {2000}
}

@article{vonLuxburg2014,
  author  = {von Luxburg, Ulrike and Radl, Agnes and Hein, Matthias},
  title   = {Hitting and Commute Times in Large Random Neighborhood Graphs},
  journal = {Journal of Machine Learning Research},
  volume  = {15},
  number  = {1},
  pages   = {1751--1798},
  year    = {2014}
}

@article{cserti2000,
  author  = {Cserti, J{\'o}zsef},
  title   = {Application of the lattice {G}reen's function for calculating 
             the resistance of infinite networks of resistors},
  journal = {American Journal of Physics},
  volume  = {68},
  number  = {10},
  pages   = {896--906},
  year    = {2000}
}

@article{jeng2004,
  author  = {Jeng, Monwhea},
  title   = {Random walks and effective resistances on toroidal and 
             cylindrical grids},
  journal = {American Journal of Physics},
  volume  = {68},
  number  = {1},
  pages   = {37--40},
  year    = {2000}
}

@article{rossi2015,
  author  = {Rossi, Wilbert Samuel and Frasca, Paolo and Fagnani, Fabio},
  title   = {Average resistance of toroidal graphs},
  journal = {SIAM Journal on Control and Optimization},
  volume  = {53},
  number  = {4},
  pages   = {2541--2560},
  year    = {2015}
}

@misc{rupchin2025rgg,
  author       = {Rupchin, Filip},
  title        = {Random Geometric Graphs and Effective Resistance},
  year         = {2025},
  url = {https://github.com/FilipRupchin/Random-Geometric-Graphs-and-Effective-Resistance}
}

@inproceedings{radl2009, title={The resistance distance is meaningless for large random geometric graphs}, booktitle={Proc. Workshop on Analyzing Networks and Learning with Graphs}, author={Radl, Agnes and Von Luxburg, Ulrike and Hein, Matthias}, year={2009} }

@article{bonchi2012, title={Fast Matrix Computations for Pairwise and Columnwise Commute Times and Katz Scores}, volume={8}, ISSN={1542-7951, 1944-9488}, DOI={10.1080/15427951.2012.625256}, number={1–2}, journal={Internet Mathematics}, author={Bonchi, Francesco and Esfandiar, Pooya and Gleich, David F. and Greif, Chen and Lakshmanan, Laks V.S.}, year={2012}, month=mar, pages={73–112} }

@article{McPherson1983, title={An Ecology of Affiliation}, volume={48}, ISSN={0003-1224}, DOI={10.2307/2117719}, number={4}, journal={American Sociological Review}, publisher={[American Sociological Association, Sage Publications, Inc.]}, author={McPherson, Miller}, year={1983}, pages={519–532} }
